\newtheorem{theorem}{Theorem}%  meant for continuous numbers
\newtheorem{remark}{Remark}%
\newtheorem{corollary}{Corollary}%
\title{Polynomial approximation of derivatives by the constrained mock-Chebyshev least squares operator}
\date{}
\author[1]{F. Dell'Accio}
\author[1]{ F. Nudo }
\affil[1]{Department of Mathematics and Computer Science, University of Calabria, Rende (CS), Italy}
\begin{document}
\maketitle

\begin{abstract}
The constrained mock-Chebyshev least squares operator is a linear approximation operator based on an equispaced grid of points. Like other polynomial or rational approximation methods, it was recently introduced in order to defeat the Runge phenomenon that occurs when using polynomial interpolation on large sets of equally spaced points. The idea is to improve the mock-Chebyshev subset interpolation, where the considered function $f$ is interpolated only on a proper subset of the uniform grid, formed by nodes that mimic the behavior of Chebyshev--Lobatto nodes. In the mock-Chebyshev subset interpolation all remaining nodes are discarded, while in the constrained mock-Chebyshev least squares interpolation they are used in a simultaneous regression, with the aim to further improving the accuracy of the approximation provided by the mock-Chebyshev subset interpolation. The goal of this paper is two-fold. We discuss some theoretical aspects of the constrained mock-Chebyshev least squares operator and present new results. In particular, we introduce explicit representations of the error and its derivatives. Moreover, for a sufficiently smooth function $f$ in $[-1,1]$, we present a method for approximating the successive derivatives of $f$ at a point $x\in [-1,1]$, based on the constrained mock-Chebyshev least squares operator and provide estimates for these approximations.
Numerical tests demonstrate the effectiveness of the proposed method.  
\end{abstract}

\section{Introduction}

 Let $X_n=\{x_0,\dots,x_n\}$ be the set of $n+1$ equispaced nodes in $[-1,1]$, that is 
$$x_i=-1+\frac{2}{n}i, \quad i=0,\dots,n. $$
In order to define the constrained mock-Chebyshev least squares linear operator, some settings are needed. 
 We set  $$m=\left\lfloor \pi \sqrt{\frac{n}{2}} \right\rfloor, \quad  p=\left\lfloor\frac{\pi}{\sqrt{2}}\sqrt{\frac{{{n}}}{{{6}}}} \right\rfloor,$$
and we consider a basis  $\mathcal{B}=\{u_0(x),\dots,u_r(x)\}$ of the polynomial space $\Pi_r$, constituted by polynomials of degree less than or equal to $r=m+p+1$. We denote by 
$X^{\prime}_m=\{x^{\prime}_k\}_{k=0}^m$ the subset of mock-Chebyshev nodes~\cite{Boyd:2009:DFL, Ibrahimoglu:2020:AFA, Ibrahimoglu:2021:ANA}, that is the nodes of the uniform grid $X_n$ which best mimic the behavior of the well known Chebyshev--Lobatto nodes and by $X^{\prime\prime}_{n-m}=X_n\setminus X^{\prime}_m$ the relative complement of $X^{\prime}_m$ with respect to $X_n$. In~\cite{DeMarchi:2015:OTC} it is proven that,
when $n$ is sufficiently large, we can approximate an equispaced grid of $q=\left\lfloor\frac{n}{6}\right\rfloor$ internal nodes of $[-1,1]$ with nodes which belong to $X^{\prime\prime}_{n-m}$. We denote this grid by $\tilde{X}^{\prime\prime}_{n-m},$ and by $X^{\prime\prime\prime}_p=\{x^{\prime\prime\prime}_{k}\}_{k=0}^p$ the mock-Chebyshev subset of $\tilde{X}^{\prime\prime}_{n-m}$. We suppose to have reordered the set $X_n$ so that its first $m+1$ points are those of $X^{\prime}_m$. We suppose also that the first $m+1$ elements of the basis $\mathcal{B}$ span the polynomial space $\Pi_m$.  We define 
the constrained mock-Chebyshev least squares linear operator as follows 
\begin{eqnarray*}
% \nonumber to remove numbering (before each equation)
\hat{P}_{r,n}:  C([-1,1]) &\rightarrow& C([-1,1])
\\
f(x) &\mapsto& \hat{P}_{r,n}[f](x)=\sum_{i=0}^r\hat{a}_iu_i(x), \qquad x\in[-1,1],
\end{eqnarray*}
where $\hat{\boldsymbol{a}}=[\hat{a}_0,\hat{a}_1,\dots,\hat{a}_r]^T$ is the solution of the KKT linear equations~\cite{Boyd:2018:ITA}
\begin{equation}
\label{Cons_Least_Squares_Prob}
    \begin{bmatrix}
2 V^TV & C^T  \\
C & 0  \\
\end{bmatrix}
\begin{bmatrix}
\hat{\boldsymbol{a}} \\
\hat{\boldsymbol{z}} \\
\end{bmatrix}=
\begin{bmatrix}
2 V^T\boldsymbol{b} \\
\boldsymbol{d} \\
\end{bmatrix}.
\end{equation}
In the equation~\eqref{Cons_Least_Squares_Prob} the matrices $V$ and $C$ are defined as follows
\begin{equation*}
   V=[u_j(x_i)]_{\substack{i=0,\dots,n\\ j=0,\dots,r}},  \qquad C=[u_j(x_i)]_{\substack{i=0,\dots,m\\ j=0,\dots,r}},
\end{equation*}
 $\boldsymbol{b}=[f(x_0),\dots,f(x_n)]^T$,  $\boldsymbol{d}=[f(x_0),\dots,f(x_m)]^T$, 
and $\hat{\boldsymbol{z}}=[\hat{z_1},\dots,\hat{z}_{m+1}]^T$ is the Lagrange multipliers vector. The matrix 
\begin{equation}
\label{KKTMatrix}
M=
    \begin{bmatrix}
2 V^TV & C^T  \\
C & 0  \\
\end{bmatrix}
\end{equation}
 is non singular~\cite{DellAccio:2022:GOT} and is called KKT matrix, in honor to W. Karush, H.W. Kuhn and A. Tucker~\cite[Chapter 16]{Boyd:2018:ITA}. The operator $\hat{P}_{r,n}$ satisfies the following properties:
\begin{itemize}
     \item[$\textbf{\textit{i}}$)]  $\hat{P}_{r,n}$ is a linear operator~\cite{DellAccio:2022CMC}, that is
    \begin{equation}
        \label{eq:PropLinearity}
        \hat{P}_{r,n}\left[\lambda f+\mu g\right]=\lambda\hat{P}_{r,n}[f] +\mu \hat{P}_{r,n}[g], \quad  f,g\in C([-1,1]), \, \quad \lambda,\mu\in\mathbb{R};
    \end{equation}
    \item[$\textbf{\textit{ii}}$)] the range of $\hat{P}_{r,n}$ is $\Pi_r([-1,1])$;
    \item[$\textbf{\textit{iii}}$)] $\hat{P}_{r,n}$ reproduces polynomials of degree $\le r$~\cite{DeMarchi:2015:OTC}, that is
    \begin{equation}
        \label{eq:PropReprodPolyn}
        \hat{P}_{r,n}[q]=q, \quad \text{ for each } q\in\Pi_r;
    \end{equation}
\item[$\textbf{\textit{iv}}$)] $\hat{P}_{r,n}$ is idempotent, that is
    $$\hat{P}_{r,n}^2=\hat{P}_{r,n};$$
    \item[$\textbf{\textit{v}}$)]
       $\hat{P}_{r,n}[f]$ is completely determined by the evaluations of $f$ on the grid $X_n$, in particular 
    \begin{equation}
    \label{eq:PropNoInj}
     \hat{P}_{r,n}[f]=\hat{P}_{r,n}\left[P_n[f]\right], \quad \text{ for each } \, f\in C([-1,1]),
\end{equation}
where $$P_n[f](x)=\sum_{i=0}^nf(x_i)\ell_i(x), \quad x\in[-1,1],$$ 
and 
$$\ell_i(x)=\prod\limits_{\substack{j=0\\ j\neq i}}^n \frac{x-x_j}{x_i-x_j}, \quad i=0,\dots, n,  \quad x\in[-1,1].$$ 
\item[$\textbf{\textit{vi}}$)] $\hat{P}_{r,n}$ interpolates the function $f$ at the mock-Chebyshev subset of nodes, that is 
\begin{equation*}
    \hat{P}_{r,n}[f](x^{\prime}_i)=f(x^{\prime}_i), \quad i=0,\dots,m.
\end{equation*}
\end{itemize}
The constrained mock-Chebyshev least squares operator has been very recently generalized to the bivariate case~\cite{DellAccio:2022:GOT} and used to introduce accurate and stable quadrature formulas on equispaced nodes~\cite{DellAccio:2022CMC}. Despite some bound in uniform norm of the error of approximation 
\begin{equation}
    \label{eq:remainder}
    \hat{R}_{r,n}[f](x):= f(x)-\hat{P}_{r,n}[f](x), \quad x\in[-1,1],
\end{equation}
was already discussed in~\cite{DeMarchi:2015:OTC}, any explicit representation of the error $\hat{R}_{r,n}[f]$ has not been given yet. In this paper we face this problem and provide a pointwise representation of the error which takes into account the peculiarity of $\hat{P}_{r,n}[f]$ of being a mixed interpolation-regression polynomial. Starting from this representation, bounds of the error~\eqref{eq:remainder} in uniform norm will be given through the operator norm of $\hat{P}_{r,n}$ and the error of best uniform approximation by polynomials of degree less than or equal to $r$. For a sufficiently smooth function $f$ in $[-1,1]$, we obtain explicit representations of the derivative of the error~\eqref{eq:remainder} by differentiation and bounds in uniform norm by applying the Markov Theorem~\cite{Markoff:1916:UPD}. As an application, we introduce  a new differentiation method for approximating the successive derivatives of a sufficiently smooth function $f$ in $[-1,1]$ based on the constrained mock-Chebyshev least squares operator. 
Furthermore, we prove an iterative relationship between the coefficients of $\hat{P}^{(\nu)}_{r,n}$ and those of  $\hat{P}^{(\nu-1)}_{r,n}$, when they are expressed in the Chebyshev polynomial basis of first kind. 

The paper is organized as follows. In Section~\ref{Section:TheoreticallyAspects}  we discuss some theoretical aspects of the constrained mock-Chebyshev least squares operator. We present a theoretical bound for the norm of this operator and we introduce explicit representations of the error and its derivatives. Basing on this operator, in Section~\ref{Sec:NumericalDifferentiation }, we present a differentiation method for approximating the successive derivatives of a sufficiently smooth function $f$ at any point $x\in[-1,1]$. This method provides 
 a global polynomial approximations of the successive derivatives of $f$. 
The accuracy of this method is proved by several numerical examples, presented in Section~\ref{Sec:NumericalExp}.

\section{Constrained mock-Chebyshev least squares linear operator: theoretical aspects}
\label{Section:TheoreticallyAspects}

In this Section, we provide pointwise representations of the error~\eqref{eq:remainder} in terms of finite differences of the function $f\in C([-1,1])$ or its appropriate derivative, if $f$ is sufficiently smooth, and give related bounds in uniform norm. If $f$ is sufficiently smooth in $[-1,1]$ we provide explicit representations for the successive derivatives of the error~\eqref{eq:remainder} and give related bounds in uniform norm. To this aim, some preliminary results are needed. First of all, we give a bound for the norm of the operator $\hat{P}_{r,n}$,
\begin{equation}
\label{eq:NormOfOperator}
\left\lVert \hat{P}_{r,n} \right\rVert:= \sup\limits_{\substack{f\in C([-1,1])\\ \left\lVert f \right\rVert_{\infty}\le1}} \left\lVert \hat{P}_{r,n}[f] \right\rVert_{\infty}.
\end{equation} 

\begin{theorem}
\label{Thm:EstimateOfNorm}
The constrained mock-Chebyshev least squares operator is bounded. In particular 
\begin{equation}
\label{eq:BoundThm}
    \left\lVert \hat{P}_{r,n} \right\rVert \le C\left(2 (r+1)\kappa(M)+(m+1)\left\lVert
 M^{-1}\right\rVert_1\right),
\end{equation}
where 
\begin{equation}
    \label{eq:ConstantC}
   C:=\max_{j=0,\dots,r}\left\lVert u_j \right\rVert_{\infty}, \qquad \kappa(M)=\left\lVert
 M
\right\rVert_1 \lVert
 M^{-1}
\rVert_1.
\end{equation}
\end{theorem}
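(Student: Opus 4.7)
The plan is to reduce the operator-norm inequality to an $\ell_1$-estimate for the coefficient vector $\hat{\boldsymbol{a}}$. Since $\hat{P}_{r,n}[f](x)=\sum_{i=0}^r\hat{a}_i u_i(x)$, for any $x\in[-1,1]$ and any $f$ with $\|f\|_\infty\le 1$,
\begin{equation*}
|\hat{P}_{r,n}[f](x)|\le \|\hat{\boldsymbol{a}}\|_1\max_{0\le i\le r}\|u_i\|_\infty=C\,\|\hat{\boldsymbol{a}}\|_1.
\end{equation*}
Taking the supremum over $x$ and over admissible $f$, it is therefore enough to prove that $\|\hat{\boldsymbol{a}}\|_1\le 2(r+1)\kappa(M)+(m+1)\|M^{-1}\|_1$ whenever $\|f\|_\infty\le 1$.

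Next I would exploit the fact that $M$ is non-singular to invert the KKT system~\eqref{Cons_Least_Squares_Prob} directly. Writing $\hat{\boldsymbol{y}}=[\hat{\boldsymbol{a}};\hat{\boldsymbol{z}}]$ and $\boldsymbol{w}=[2V^T\boldsymbol{b};\boldsymbol{d}]$, we have $\hat{\boldsymbol{y}}=M^{-1}\boldsymbol{w}$, so by sub-multiplicativity of the matrix $1$-norm
\begin{equation*}
\|\hat{\boldsymbol{a}}\|_1\le \|\hat{\boldsymbol{y}}\|_1\le \|M^{-1}\|_1\,\|\boldsymbol{w}\|_1=\|M^{-1}\|_1\bigl(2\|V^T\boldsymbol{b}\|_1+\|\boldsymbol{d}\|_1\bigr).
\end{equation*}
The interpolation contribution is immediate: since $\boldsymbol{d}$ consists of $m+1$ samples of $f$, $\|\boldsymbol{d}\|_1\le(m+1)\|f\|_\infty$, which gives the summand $(m+1)\|M^{-1}\|_1$ in~\eqref{eq:BoundThm}.

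For the regression contribution I would first use that $V^T\boldsymbol{b}\in\mathbb{R}^{r+1}$ to pass to the infinity norm, $\|V^T\boldsymbol{b}\|_1\le (r+1)\|V^T\boldsymbol{b}\|_\infty$, and then establish the key estimate $\|V^T\boldsymbol{b}\|_\infty\le \|M\|_1\,\|f\|_\infty$. This is the main technical obstacle. The natural route is to bound $|(V^T\boldsymbol{b})_k|\le\sum_{i=0}^n|u_k(x_i)|\,\|f\|_\infty$ and to control the absolute column sum $\sum_i|u_k(x_i)|$ by $\|M\|_1$, by reading off the contributions coming from both the Gram block $2V^TV$ and the constraint block $C$ in the $k$-th column of the KKT matrix $M$. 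With this estimate in hand, the regression source contributes $2(r+1)\|M\|_1\|M^{-1}\|_1=2(r+1)\kappa(M)$; summing with the interpolation piece and multiplying by the prefactor $C$ then yields~\eqref{eq:BoundThm}. The interpolation step and the KKT reduction are routine, while the crux of the argument is precisely the last displayed inequality on $\|V^T\boldsymbol{b}\|_\infty$.
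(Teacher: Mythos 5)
Your proposal follows essentially the same route as the paper's proof: bound $\lVert \hat{P}_{r,n}[f]\rVert_{\infty}$ by $C\lVert\hat{\boldsymbol{a}}\rVert_1$, invert the KKT system to get $\lVert\hat{\boldsymbol{a}}\rVert_1\le\lVert M^{-1}\rVert_1\bigl(2\lVert V^T\boldsymbol{b}\rVert_1+\lVert\boldsymbol{d}\rVert_1\bigr)$, and estimate the two pieces by $2(r+1)\lVert M\rVert_1$ and $m+1$ respectively. The inequality you single out as the crux, $\sum_{i=0}^n\lvert u_k(x_i)\rvert\le\lVert M\rVert_1$, is exactly the step the paper also relies on (and likewise asserts without further elaboration), so your argument matches the published one in both structure and level of detail.
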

\begin{proof}
Let $f\in C([-1,1])$ be a continuous function such that $\left\lVert f \right\rVert_{\infty}\le 1$. By using the triangular inequality we get
\begin{equation}
\label{eq:ContinuityCMCLSO}
    \left\lvert \hat{P}_{r,n}[f](x) \right\rvert=\left\lvert \sum_{i=0}^r \hat{a}_iu_i(x) \right\rvert\le \sum_{i=0}^r \left\lvert\hat{a}_i\right\rvert \left\lvert u_i(x)\right\rvert, \quad x\in[-1,1].
\end{equation}
From~\eqref{eq:ContinuityCMCLSO}, by passing to the supremum with respect to $x\in[-1,1]$, by the setting~\eqref{eq:ConstantC} we get
\begin{equation} 
\label{eq:BoundP(f)}
   \left \lVert \hat{P}_{r,n}[f] \right\rVert_{\infty} \le C \sum_{i=0}^r \left\lvert\hat{a}_i\right\rvert=C\left\lVert\hat{\boldsymbol{a}}\right\rVert_1.
\end{equation}
We bound the $1$-norm of the coefficients vector $\hat{\boldsymbol{a}}$ as follows.
Since the KKT matrix~\eqref{KKTMatrix} is not singular, from the linear system~\eqref{Cons_Least_Squares_Prob} we get
\begin{eqnarray*}
\left\lVert
\hat{\boldsymbol{a}} 
\right\rVert_1
\le
\left\lVert
\begin{bmatrix}
\hat{\boldsymbol{a}} \\
\hat{\boldsymbol{z}} \\
\end{bmatrix}
\right\rVert_1&\le&
\left\lVert
 M^{-1}
\right\rVert_1
\left\lVert
\begin{bmatrix}
2 V^T\boldsymbol{b} \\
\boldsymbol{d} \\
\end{bmatrix}
\right\Vert_1\\
&=& \left\lVert
 M^{-1}
\right\rVert_1\left(2 \sum_{j=0}^r \left\lvert\sum_{i=0}^n u_j(x_i)f(x_i)\right\rvert +\sum_{j=0}^m \left\lvert f(x_j)\right\rvert\right) \\
&\le& \left\lVert
 M^{-1}
\right\rVert_1\left(2 \sum_{j=0}^r \sum_{i=0}^{n} \left\lvert u_j(x_i)\right\rvert+m+1\right)\\
&\le& \left\lVert
 M^{-1}
\right\rVert_1\left(2 \left\lVert M\right\rVert_1(r+1)+m+1\right).
\end{eqnarray*}
Therefore
\begin{equation}
    \label{eq:a1}
\left\lVert
\hat{\boldsymbol{a}} 
\right\rVert_1    \le 2 (r+1)\kappa(M)+(m+1)\left\lVert
 M^{-1}
\right\rVert_1.
\end{equation}
Finally, by using the bound~\eqref{eq:a1} in~\eqref{eq:BoundP(f)}, we get
\begin{equation}
\label{eq:polval}
   \left \lVert \hat{P}_{r,n}[f] \right\rVert_{\infty} \le C\left(2 (r+1)\kappa(M)+(m+1)\left\lVert
 M^{-1}\right\rVert_1\right).
\end{equation}
Since the right-hand side of~\eqref{eq:polval} does not depend on $f$, from~\eqref{eq:NormOfOperator}, we have
\begin{equation*}
\left \lVert \hat{P}_{r,n} \right\rVert\le C\left(2 (r+1)\kappa(M)+(m+1)\left\lVert
 M^{-1}\right\rVert_1\right).
\end{equation*}
\end{proof}
\begin{table}
\begin{center}
\label{Tab: normM-1}%
\begin{tabular}{llllllll}
$n$ & 100  & 500 & 1000 & 5000 & 10000 & 50000 & 100000\\
\hline
 $\kappa(M)$ & 7.79e+03 & 8.45e+04  & 2.43e+05 & 2.86e+06 &  8.34e+06 &   1.00e+08 &  2.91e+08 \\
 $\left\lVert
 M^{-1}
\right\rVert_1$ & 21.80 & 52.90  & 78.20 & 186.75 &  275.55 &   661.36 &  965.75 \\
\end{tabular}
\caption{Values of the condition number $\kappa(M)$ and of the norm $\left\lVert
 M^{-1}
\right\rVert_1$ in correspondence of different values of $n$ ranging from $n=100$ to $n=100000$, by using the Chebyshev polynomial basis of the first kind.}
\end{center}
\end{table}
\begin{remark}
\label{remark_Norm1ChebPolBasis}
In the case of the Chebyshev polynomial basis of the first kind
$$\mathcal{B}_{C,1}=\{T_0(x),\dots,T_r(x)\}, \quad T_k(x)=\cos({k\arccos{x}}), \quad x\in[-1,1], \quad k=0,\dots,r,
$$
we have $\left\lVert T_k \right\rVert_{\infty}=1$ for each $k=0,\dots,r$~\cite[Chapter 1]{Rivlin:1975:PSL}.
In this case equation~\eqref{eq:BoundThm} becomes
\begin{equation}
\label{eq:newBound}
    \left\lVert \hat{P}_{r,n} \right\rVert_{\infty} \le 2 (r+1)\kappa(M)+(m+1)\left\lVert
 M^{-1}\right\rVert_1.
\end{equation}
\end{remark}

In order to appreciate the quality of the bound~\eqref{eq:newBound}, we explicitly compute $\kappa(M)$ and $\left\lVert M^{-1}\right\rVert_1$ for different values of $n$ ranging from $100$ to $100000$. These values are shown in Table~\ref{Tab: normM-1}. 
 We computed also the value 
\begin{equation}
\label{bound1}
 B_n:= 2 (r+1)\kappa(M)+(m+1)\left\lVert
 M^{-1}\right\rVert_1 
\end{equation}
 of the bound~\eqref{eq:newBound} of the norm $\left\lVert \hat{P}_{r,n}\right\rVert$ relative to sets of $n+1$ equispaced nodes with $n$ ranging from $100$ to $100000$. The results are represented in Figure~\ref{Fig:normoperator}, where the plot is realized on a log-log scale. The plot shows a linear relation between the logarithm of $n$ and the logarithm of the bound $B_n$. We computed the coefficients of this relation through a linear regression and after standard computations, we found
$$B_n\approx e^{3.66}n^{2.03}.$$
Figure~\ref{Fig:normoperator} contains the approximations of the bound $B_n$ computed through the regression line, as well. 

\begin{figure}[h]
\begin{center}
    \includegraphics[width=.49\textwidth]{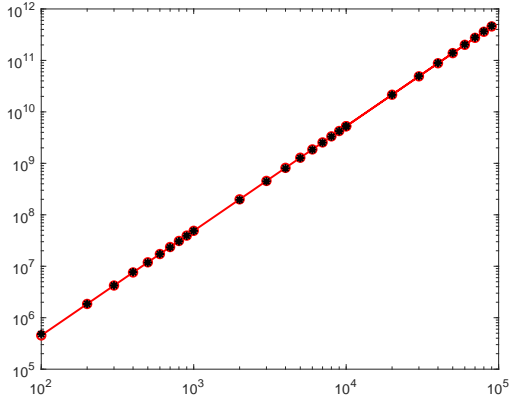} 
         \caption{Log-log plot of the values $B(n)$ (in black stars, \textcolor{black}{$\star$}) of the bound~\eqref{eq:newBound} of the norm $\left\lVert \hat{P}_{r,n}\right\rVert$  relative to sets of $n+1$ equispaced nodes, with $n$ ranging from $10^2$ to $10^5$. The linear relation between the values on the $x$-axis and those on the $y$-axis is evident and confirmed by the closeness of their approximations (in red circles, \textcolor{red}{o}) computed through a linear regression. }
    \label{Fig:normoperator}
    \end{center}
\end{figure}

In view of Properties $\textbf{\textit{i}})$-$\textbf{\textit{iv}})$ and Theorem~\ref{Thm:EstimateOfNorm}, the constrained mock-Chebyshev least squares operator is a projection on the polynomial space $\Pi_r$~\cite[Chapter 6]{Cheney:1998:ITA}. As a consequence, it is possible to give standard estimation for the approximation error
\begin{equation}
\label{eq:ApproximationError}
    E[f]:=\left\lVert f-\hat{P}_{r,n}[f]\right\rVert_{\infty}, \quad f\in C([-1,1]).
\end{equation}
With this aim, for computational convenience and to short the notation, we suppose to work with the Chebyshev polynomial basis of first kind $\mathcal{B}_{C,1}$. 
\begin{theorem}
\label{Thm:Errorb}
Let be $f\in C([-1,1])$, then 
\begin{equation*}
    E[f]\le \left(1+B_n\right)\left\lVert f-p_r^{\star}\right\rVert_{\infty},
\end{equation*}
where $p_r^{\star}$ is the polynomial of best uniform approximation of $f$ of degree less than or equal to $r$. 
\end{theorem}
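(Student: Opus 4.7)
The statement is the classical Lebesgue-type inequality for a linear projection onto $\Pi_r$, so the plan is to use the standard add-subtract trick with the best approximation polynomial $p_r^{\star}$, combined with the polynomial reproduction property (iii) and the operator bound from Theorem~\ref{Thm:EstimateOfNorm}.

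First I would write
\begin{equation*}
f - \hat{P}_{r,n}[f] = (f - p_r^{\star}) + \bigl(p_r^{\star} - \hat{P}_{r,n}[f]\bigr).
\end{equation*}
Since $p_r^{\star}\in\Pi_r$, property~\eqref{eq:PropReprodPolyn} gives $\hat{P}_{r,n}[p_r^{\star}]=p_r^{\star}$, and then linearity~\eqref{eq:PropLinearity} yields
\begin{equation*}
p_r^{\star} - \hat{P}_{r,n}[f] = \hat{P}_{r,n}[p_r^{\star}] - \hat{P}_{r,n}[f] = \hat{P}_{r,n}\bigl[p_r^{\star} - f\bigr].
\end{equation*}
Taking uniform norms and using the triangle inequality together with the definition~\eqref{eq:NormOfOperator} of the operator norm gives
\begin{equation*}
E[f] \le \left\lVert f-p_r^{\star}\right\rVert_{\infty} + \left\lVert \hat{P}_{r,n}\right\rVert \cdot \left\lVert f-p_r^{\star}\right\rVert_{\infty} = \bigl(1+\left\lVert \hat{P}_{r,n}\right\rVert\bigr)\left\lVert f-p_r^{\star}\right\rVert_{\infty}.
\end{equation*}

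Finally I would invoke Theorem~\ref{Thm:EstimateOfNorm} in the Chebyshev basis form~\eqref{eq:newBound} (since we are working with $\mathcal{B}_{C,1}$), which together with the notation~\eqref{bound1} gives $\left\lVert \hat{P}_{r,n}\right\rVert \le B_n$, and hence
\begin{equation*}
E[f] \le \bigl(1+B_n\bigr)\left\lVert f-p_r^{\star}\right\rVert_{\infty}.
\end{equation*}

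There is essentially no obstacle here: the argument is the textbook Lebesgue inequality, and all ingredients (reproduction of $\Pi_r$, linearity, and the explicit bound on $\lVert\hat{P}_{r,n}\rVert$) are already in place from the preceding material. The only thing to be careful about is to cite the Chebyshev-basis version~\eqref{eq:newBound} of the bound rather than the more general~\eqref{eq:BoundThm}, consistent with the assumption made just before the theorem that we work with $\mathcal{B}_{C,1}$ (so that $C=1$).
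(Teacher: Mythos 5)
Your proof is correct and follows essentially the same route as the paper: the same add-and-subtract of $p_r^{\star}$, the same use of reproduction plus linearity to write $p_r^{\star}-\hat{P}_{r,n}[f]=\hat{P}_{r,n}[p_r^{\star}-f]$, and the same appeal to Theorem~\ref{Thm:EstimateOfNorm} with the notation~\eqref{bound1} to replace $\lVert\hat{P}_{r,n}\rVert$ by $B_n$. Your closing remark about using the Chebyshev-basis form with $C=1$ is a sensible clarification that the paper leaves implicit.
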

\begin{proof}
By the Properties $\textbf{\textit{i}})$-$\textbf{\textit{iii}})$, we easily find
\begin{eqnarray*}
    \left\lVert f-\hat{P}_{r,n}[f]\right\rVert_{\infty}&=&    \left\lVert f-p_r^{\star}+p_r^{\star}-\hat{P}_{r,n}[f]\right\rVert_{\infty}\\
    &=&\left\lVert f-p_r^{\star}-\hat{P}_{r,n}[f-p_r^{\star}]\right\rVert_{\infty}\le \left(1+\left\lVert \hat{P}_{r,n}\right\rVert\right)\left\lVert f-p_r^{\star}\right\rVert_{\infty}.
\end{eqnarray*}
The result then follows from  Theorem~\ref{Thm:EstimateOfNorm} after setting~\eqref{bound1}.
\end{proof}
\begin{corollary}
\label{corollaryJackson}
Let be $f\in C^k([-1,1])$, $k=0,\dots,r$, then we have
\begin{equation}
\label{NormError1}
   E[f]\le \left(1+B_n\right) \omega_f\left(\frac{\pi}{r+1}\right), \quad k=0,
\end{equation}
\begin{equation}
\label{NormErrorEstimate}
   E[f]\le \left(\frac{\pi}{2}\right)^k\left(1+B_n\right) \frac{\left\lVert f^{(k)}\right\rVert_{\infty}}{(r+1)r\cdots (r-k+2)}, \quad 0<k\le r,
\end{equation}
where $\omega_f(\cdot)$ is the modulus of continuity of the function $f$~\cite{Cheney:1998:ITA}. 
\end{corollary}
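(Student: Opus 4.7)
The plan is to reduce the problem to a classical best-approximation estimate and then invoke Jackson-type theorems. Theorem~\ref{Thm:Errorb} already furnishes the inequality
\[
E[f] \le (1+B_n)\,\lVert f - p_r^\star\rVert_\infty,
\]
so the task boils down to controlling the error of best uniform approximation $\lVert f - p_r^\star\rVert_\infty$ in terms of either the modulus of continuity of $f$ (when $k=0$) or a norm of a derivative of $f$ (when $0<k\le r$).

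For the case $k=0$, I would cite the first Jackson theorem (in the Chebyshev-sharp form stated, e.g., in~\cite{Cheney:1998:ITA}), which yields
\[
\lVert f - p_r^\star\rVert_\infty \le \omega_f\!\left(\frac{\pi}{r+1}\right).
\]
Substituting this bound into Theorem~\ref{Thm:Errorb} directly produces~\eqref{NormError1}.

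For the case $0<k\le r$, the strategy is to apply the higher-order Jackson theorem, which for $f\in C^k([-1,1])$ gives the well-known estimate
\[
\lVert f - p_r^\star\rVert_\infty \le \left(\frac{\pi}{2}\right)^k \frac{\lVert f^{(k)}\rVert_\infty}{(r+1)\,r\,\cdots\,(r-k+2)}.
\]
This can be derived by iterating the zero-order Jackson inequality applied to $f^{(k)}$ and using Markov-type relations between the degrees $r, r-1, \dots, r-k+1$. Plugging this into Theorem~\ref{Thm:Errorb} yields~\eqref{NormErrorEstimate}.

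There is no real obstacle here: the corollary is essentially a direct specialization of Theorem~\ref{Thm:Errorb} by inserting the appropriate Jackson inequality. The only care needed is in choosing the precise form of the Jackson estimate so that the constants $(\pi/(r+1))$ in the modulus-of-continuity case and the factor $(\pi/2)^k/[(r+1)r\cdots(r-k+2)]$ in the derivative case match exactly with the statement; both of these are standard and can be quoted from~\cite{Cheney:1998:ITA}.
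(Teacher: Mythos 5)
Your proposal is correct and follows exactly the paper's own argument: the paper's proof is the one-line observation that the corollary follows from Theorem~\ref{Thm:Errorb} combined with the Jackson theorems of~\cite[Chapter 4]{Cheney:1998:ITA}, which is precisely the reduction you carry out. You merely spell out the two Jackson estimates explicitly, which adds detail but no new idea.
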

\begin{proof}
The result follows from Theorem~\ref{Thm:Errorb} and Jackson Theorem~\cite[Chapter 4]{Cheney:1998:ITA}. 
\end{proof}

\begin{figure}
\begin{center}
    \includegraphics[width=.49\textwidth]{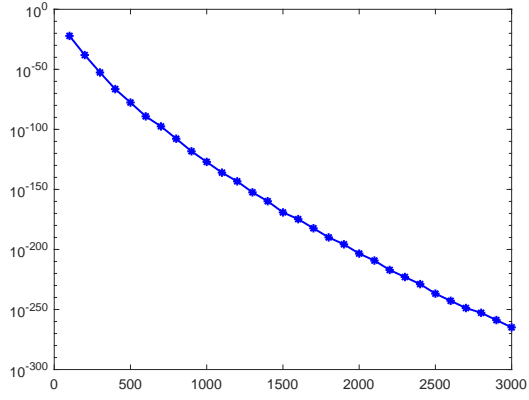} 
         \caption{Semilog plot of the values $\left(\frac{\pi}{2}\right)^r \frac{\left(1+B_n\right)}{(r+1)!}$ for $n=100k$, $k=1,\dots,30$.}
    \label{Fig:StimaErrorFun}
    \end{center}
\end{figure}

Let $P_r[f]$ be the Lagrange interpolation polynomial of the function $f$ at the node set $X^{\prime}_m\cup X^{\prime\prime\prime}_p$, that is
\begin{equation}
    P_r[f](x)=\sum_{i=0}^m \ell_{i,m}(x) f(x^{\prime}_i)+\sum_{j=0}^p \ell_{j,p}(x) f(x^{\prime\prime\prime}_j), \quad x\in[-1,1],
\end{equation}
where 
\begin{equation*}
    \ell_{i,m}(x)=\prod\limits_{\substack{k=0\\ k\neq i}}^{m}\frac{x-x^{\prime}_k}{x^{\prime}_i-x^{\prime}_k}\prod_{s=0}^{p}\frac{x-x^{\prime\prime\prime}_s}{x^{\prime}_i-x^{\prime\prime\prime}_s}, \qquad     \ell_{j,p}(x)=\prod_{k=0}^{m}\frac{x-x^{\prime}_k}{x^{\prime\prime\prime}_j-x^{\prime}_k}\prod\limits_{\substack{s=0\\ s\neq j}}^{p}\frac{x-x^{\prime\prime\prime}_s}{x^{\prime\prime\prime}_j-x^{\prime\prime\prime}_s}, \end{equation*}
and let \begin{equation}
    \label{error:LagrInter}
    R_{r}[f](x):=f(x)-P_r[f](x), \quad x\in[-1,1],
\end{equation}
the error of Lagrange interpolation. The following Theorem gives a pointwise representation of the error~\eqref{eq:remainder} for all $x\in[-1,1]$. 
\begin{theorem}
\label{Thm: EstimateDerivativeDiff}
Let be $f\in C([-1,1])$, then
\begin{equation}
    \hat{R}_{r,n}[f](x)=f(x)-\hat{P}_{r,n}[f](x)=R_r[f](x)+\sum_{j=0}^p \ell_{j,p}(x) \hat{R}_{r,n}[f](x^{\prime\prime\prime}_j).
\end{equation}
% where 
% \begin{equation*}
%     \omega_r(x)=\prod_{k=0}^m(x-x^{\prime}_k)\prod_{s=0}^p (x-x^{\prime\prime\prime}_s)
% \end{equation*}
%  and $\xi(x)\in(-1,1)$.
\end{theorem}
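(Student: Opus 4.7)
The plan is to decompose the error $\hat R_{r,n}[f](x)$ by adding and subtracting the Lagrange interpolant $P_r[f](x)$ at the nodes $X'_m\cup X'''_p$, which gives
$$\hat R_{r,n}[f](x)=\bigl(f(x)-P_r[f](x)\bigr)+\bigl(P_r[f](x)-\hat P_{r,n}[f](x)\bigr)=R_r[f](x)+Q(x),$$
where $Q(x):=P_r[f](x)-\hat P_{r,n}[f](x)$. The goal then reduces to identifying $Q$ with the claimed sum $\sum_{j=0}^p\ell_{j,p}(x)\,\hat R_{r,n}[f](x'''_j)$.

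To do this, I would observe that $Q\in\Pi_r$, since both $P_r[f]$ and $\hat P_{r,n}[f]$ lie in $\Pi_r$ (the latter by Property \textit{ii}), and that the node set $X'_m\cup X'''_p$ contains exactly $r+1=m+p+2$ distinct points, so a polynomial in $\Pi_r$ is uniquely determined by its values on this set. Next I would evaluate $Q$ on this set: at $x'_i$ with $i=0,\dots,m$, both $P_r[f]$ (by the Lagrange interpolation conditions) and $\hat P_{r,n}[f]$ (by Property \textit{vi}) equal $f(x'_i)$, so $Q(x'_i)=0$; at $x'''_j$ with $j=0,\dots,p$, one has $P_r[f](x'''_j)=f(x'''_j)$, so $Q(x'''_j)=f(x'''_j)-\hat P_{r,n}[f](x'''_j)=\hat R_{r,n}[f](x'''_j)$.

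By uniqueness of Lagrange interpolation at the nodes $X'_m\cup X'''_p$, the polynomial $Q$ coincides with its interpolant in the $\{\ell_{i,m},\ell_{j,p}\}$ cardinal basis, and the contributions from the nodes $x'_i$ vanish. This yields
$$Q(x)=\sum_{i=0}^m\ell_{i,m}(x)\cdot 0+\sum_{j=0}^p\ell_{j,p}(x)\,\hat R_{r,n}[f](x'''_j)=\sum_{j=0}^p\ell_{j,p}(x)\,\hat R_{r,n}[f](x'''_j),$$
and combining with the first decomposition gives the statement.

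There is no real obstacle here: the identity is essentially the observation that $\hat P_{r,n}[f]$ and $P_r[f]$ already agree on the $m+1$ mock-Chebyshev nodes, so the deviation between them is the Lagrange interpolant in $\Pi_r$ of the residuals $\hat R_{r,n}[f](x'''_j)$ at the $p+1$ auxiliary nodes. The only care needed is in counting: $|X'_m\cup X'''_p|=m+p+2=r+1$, which is precisely the dimension of $\Pi_r$, so the cardinal basis $\{\ell_{i,m},\ell_{j,p}\}$ is well defined and the uniqueness argument applies.
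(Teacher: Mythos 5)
Your proposal is correct and is essentially the paper's own argument in a slightly reorganized form: the paper applies the identity $\hat P_{r,n}[f]=P_r[\hat P_{r,n}[f]]$ (valid by Property \emph{ii}) and uniqueness of Lagrange interpolation on the $r+1$ nodes of $X'_m\cup X'''_p$) and uses Property \emph{vi}) to replace $\hat P_{r,n}[f](x'_i)$ by $f(x'_i)$, which is exactly your observation that the difference $Q=P_r[f]-\hat P_{r,n}[f]$ is the degree-$r$ interpolant of the residuals at the nodes $x'''_j$. The ingredients (Properties \emph{ii}) and \emph{vi}), the node count $m+p+2=r+1$, and uniqueness of interpolation) are the same in both.
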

\begin{proof}
% Since $f\in C^{r+1}([-1,1])$, we have~\cite{Cheney:1998:ITA, Agarwal:1993:EII}
By Property $\textbf{\textit{ii}}$) $\hat{P}_{r,n}[f]\in\Pi_r$ and by Property $\textbf{\textit{vi}}$) $\hat{P}_{r,n}[f](x^{\prime}_i)=f(x^{\prime}_i)$, $i=0,\dots,m$. Then by the uniqueness of the Lagrange interpolation polynomial, we get 
   \begin{eqnarray*}
     &&\hat{P}_{r,n}[f](x)=P_r[  \hat{P}_{r,n}[f]](x)=\sum_{i=0}^m \ell_{i,m}(x) \hat{P}_{r,n}[f](x^{\prime}_i)+\sum_{j=0}^p \ell_{j,p}(x) \hat{P}_{r,n}[f](x^{\prime\prime\prime}_j)\\
     &=& \sum_{i=0}^m \ell_{i,m}(x) f(x^{\prime}_i)+\sum_{j=0}^p \ell_{j,p}(x) \hat{P}_{r,n}[f](x^{\prime\prime\prime}_j)\\
     &=& \sum_{i=0}^{m} \ell_{i,m}(x) f(x^{\prime}_i)+\sum_{j=0}^{p} \ell_{j,p}(x) f(x^{\prime\prime\prime}_j)+\sum_{j=0}^p \ell_{j,p}(x) \left(\hat{P}_{r,n}[f](x^{\prime\prime\prime}_j)-f(x^{\prime\prime\prime}_j)\right)\\
     &=& P_r[f](x)+\sum_{j=0}^p \ell_{j,p}(x) \left(\hat{P}_{r,n}[f](x^{\prime\prime\prime}_j)-f(x^{\prime\prime\prime}_j)\right).
   \end{eqnarray*}
Therefore
\begin{eqnarray*}
f(x)-\hat{P}_{r,n}[f](x)&=&f(x)-P_r[f](x)+\sum_{j=0}^p \ell_{j,p}(x) \left(f(x^{\prime\prime\prime}_j)-\hat{P}_{r,n}[f](x^{\prime\prime\prime}_j)\right)\\ &=&R_r[f](x)+\sum_{j=0}^p \ell_{j,p}(x) \hat{R}_{r,n}[f](x^{\prime\prime\prime}_j).
\end{eqnarray*}
\end{proof}
Let be $f\in C^{r+1}([-1,1])$. In this case, the Peano kernel Theorem~\cite{Davis:1975:IAA, Howell:1991:DEB} allows us to represent the remainder $R_r[f]$ of Lagrange interpolation on the node set $X^{\prime}_m\cup X^{\prime\prime\prime}_p$ in integral form
\begin{equation}
\label{RemainderPeanoKernel}
R_r[f](x)=\int_{-1}^1 K(x,t)\frac{f^{(r+1)}(t)}{r!}dt, \quad x\in[-1,1],
\end{equation}
where 
\begin{equation*}
K(x,t)=(x-t)^r_{+}-P_r[(x-t)^r_{+}], \quad x,t\in[-1,1],    
\end{equation*}
and 
\begin{equation*}
    (x-t)^r_{+}=\left\{\begin{array}{ll}
    (x-t)^r, & \text{if } x-t\ge 0,  \\
    0, & \text{if } x-t<0.
\end{array} \right.
\end{equation*}
 We can differentiate both members of~\eqref{RemainderPeanoKernel} $\nu$ times, $\nu=1,\dots,r$, with respect to $x$, in order to obtain pointwise representations of the successive derivatives of the error of Lagrange interpolation
 \begin{equation}
     \label{errorpointwisederiv}
     R_r^{(\nu)}[f](x)=\int_{-1}^1\frac{\partial^{\nu} K(x,t)}{\partial x^\nu}\frac{f^{(r+1)}(t)}{r!}dt.
 \end{equation}
By using these representations, pointwise and uniform bounds for $R_r^{(\nu)}[f]$ are obtained~\cite{Howell:1991:DEB}. 
In particular, in~\cite{Howell:1991:DEB} the following uniform bounds is proven 
\begin{equation}
\label{BoundDerivPolLag}
    \left\lVert R_r^{(\nu)}\right\rVert_{\infty}\le \left\lVert \omega_r^{(\nu)}\right \rVert_{\infty}\frac{\left\lVert f^{(r+1)}\right \rVert_{\infty}}{\nu!(r+1-\nu)!}, \quad \nu=1,\dots,r,
\end{equation}
where 
\begin{equation*}
    \omega_r(x)=\prod_{k=0}^m(x-x^{\prime}_k)\prod_{s=0}^p (x-x^{\prime\prime\prime}_s), \quad x\in[-1,1].
\end{equation*}
For the derivative of the remainder of the mock-Chebyshev least squares interpolation, the following bounds hold.
\begin{theorem}
Let be $f\in C^{r+1}([-1,1])$ and $\nu=1,\dots,r$, then
\begin{eqnarray}
\label{boundDerError}
 \left\lVert \hat{R}_{r,n}^{(\nu)}[f]\right\rVert_{\infty}&=&\left\lVert f^{(\nu)}-\hat{P}_{r,n}^{(\nu)}[f]\right\rVert_{\infty} \notag \\ &\le& \frac{\prod\limits_{j=0}^{\nu-1}\left((r+1)^2-j^2\right)}{\prod\limits_{j=0}^{\nu-1}(2j+1)}
 \left\lVert \omega_r\right \rVert_{\infty}\frac{\left\lVert f^{(r+1)}\right \rVert_{\infty}}{\nu!(r+1-\nu)!}\notag\\
 &&+\frac{\prod\limits_{j=0}^{\nu-1}\left(r^2-j^2\right)}{\prod\limits_{j=0}^{\nu-1}(2j+1)}\sum\limits_{k=0}^p\left\lVert \ell_{k,p} \right\rVert_{\infty}\left(\frac{\pi}{2}\right)^r\left(1+B_n\right) \frac{\left\lVert f^{(r)}\right\rVert_{\infty}}{(r+1)!}.
\end{eqnarray}
\end{theorem}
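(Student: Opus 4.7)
The plan is to start from the pointwise error representation established in Theorem~\ref{Thm: EstimateDerivativeDiff} and differentiate it $\nu$ times with respect to $x$. Since the scalars $\hat{R}_{r,n}[f](x^{\prime\prime\prime}_j)$ do not depend on $x$, termwise differentiation yields
\[
\hat{R}_{r,n}^{(\nu)}[f](x) = R_r^{(\nu)}[f](x) + \sum_{j=0}^p \ell_{j,p}^{(\nu)}(x)\,\hat{R}_{r,n}[f](x^{\prime\prime\prime}_j).
\]
Passing to the supremum norm on $[-1,1]$ and applying the triangle inequality produces the natural splitting
\[
\|\hat{R}_{r,n}^{(\nu)}[f]\|_\infty \le \|R_r^{(\nu)}[f]\|_\infty + \sum_{k=0}^p \|\ell_{k,p}^{(\nu)}\|_\infty \,|\hat{R}_{r,n}[f](x^{\prime\prime\prime}_k)|,
\]
so the proof reduces to estimating the three ingredients on the right.

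For the first summand I would invoke the Peano-kernel estimate~\eqref{BoundDerivPolLag}, which yields $\|R_r^{(\nu)}[f]\|_\infty \le \|\omega_r^{(\nu)}\|_\infty \|f^{(r+1)}\|_\infty / (\nu!\,(r+1-\nu)!)$. Since $\omega_r$ is a polynomial of degree $r+1$ on $[-1,1]$, the iterated Markov theorem~\cite{Markoff:1916:UPD} supplies
\[
\|\omega_r^{(\nu)}\|_\infty \le \frac{\prod_{j=0}^{\nu-1}\bigl((r+1)^2-j^2\bigr)}{\prod_{j=0}^{\nu-1}(2j+1)}\,\|\omega_r\|_\infty,
\]
which, combined with~\eqref{BoundDerivPolLag}, is exactly the first line of the claimed bound.

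For the second summand, each Lagrange basis polynomial $\ell_{k,p}$ for the node set $X'_m \cup X'''_p$ interpolates at $(m+1)+(p+1)=r+1$ nodes, hence has degree $r$, and so another application of the Markov theorem gives
\[
\|\ell_{k,p}^{(\nu)}\|_\infty \le \frac{\prod_{j=0}^{\nu-1}(r^2-j^2)}{\prod_{j=0}^{\nu-1}(2j+1)}\,\|\ell_{k,p}\|_\infty.
\]
To bound the nodal residuals uniformly in $k$, I would use $|\hat{R}_{r,n}[f](x^{\prime\prime\prime}_k)|\le E[f]$ together with Corollary~\ref{corollaryJackson} specialized to $k=r$, which yields $E[f]\le (\pi/2)^{r}(1+B_n)\|f^{(r)}\|_\infty / (r+1)!$. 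Factoring the quantity common to all $k$ out of the sum produces precisely the second line of the claimed bound, and adding the two estimates completes the proof.

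No genuine obstacle is expected: the only subtle point is to notice that $\omega_r$ has degree $r+1$ whereas each $\ell_{k,p}$ has degree $r$, which explains the distinction between the factors $(r+1)^2-j^2$ and $r^2-j^2$ in the statement. Once this is observed, the remaining steps amount to bookkeeping applications of the already available Peano-kernel estimate~\eqref{BoundDerivPolLag}, the Markov theorem, and Corollary~\ref{corollaryJackson}.
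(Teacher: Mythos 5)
Your proposal is correct and follows essentially the same route as the paper: differentiate the pointwise representation of Theorem~\ref{Thm: EstimateDerivativeDiff}, apply the triangle inequality, bound $\left\lVert R_r^{(\nu)}[f]\right\rVert_{\infty}$ via~\eqref{BoundDerivPolLag} combined with the Markov inequality on $\omega_r$ (degree $r+1$), bound $\left\lVert \ell_{k,p}^{(\nu)}\right\rVert_{\infty}$ via the Markov inequality on the degree-$r$ fundamental polynomials, and control the nodal residuals with Corollary~\ref{corollaryJackson} at $k=r$. Your write-up actually makes explicit the degree bookkeeping ($r+1$ versus $r$) that the paper's terse proof leaves implicit.
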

\begin{proof}
For each $\nu=1,\dots,r$, by Theorem~\ref{Thm: EstimateDerivativeDiff}, we have
\begin{equation*}
    \hat{R}_{r,n}^{(\nu)}[f](x)=R_r^{(\nu)}[f](x)+\sum_{j=0}^p \ell^{(\nu)}_{j,p}(x) \left(\hat{P}_{r,n}[f](x^{\prime\prime\prime}_j)-f(x^{\prime\prime\prime}_j)\right).
\end{equation*}
By using the triangular inequality 
\begin{equation*}
    \left\lVert \hat{R}_{r,n}^{(\nu)}[f]\right\rVert_{\infty}\le   \left\lVert R_{r}^{(\nu)}[f]\right\rVert_{\infty}+\sum_{j=0}^p \left\lVert\ell^{(\nu)}_{j,p}\right\rVert_{\infty} \left\lvert\hat{R}_{r,n}[f](x^{\prime\prime\prime}_j)\right\rvert.
\end{equation*}
The result follows from~\eqref{BoundDerivPolLag} by applying the Markov's inequality~\cite{Markoff:1916:UPD} to bound both $ \left\lVert R_{r}^{(\nu)}[f]\right\rVert_{\infty}$ and $\left\lVert\ell^{(\nu)}_{j,p}\right\rVert_{\infty}$, and the Corollary~\ref{corollaryJackson} to bound $\left\lvert\hat{R}_{r,n}[f](x^{\prime\prime\prime}_j)\right\rvert$.
\end{proof}

In Figures~\ref{Fig:Casen100}, \ref{Fig:Casen1000} and~\ref{Fig:Casen10000} the nodal polynomial $\omega_m$ at the mock-Chebyshev nodes $X^{\prime}_m$, the nodal polynomial $\omega_r$ at the node set $X^{\prime}_m\cup X^{\prime\prime\prime}_p$ and the Lagrange fundamental polynomials at the node set $X^{\prime}_m\cup X^{\prime\prime\prime}_p$ are graphically represented for $n=100,1000,10000$. It is worth noting that the uniform norm of $\omega_r$ is always less than the uniform norm of $\omega_m$ and the ratio $\left\lVert\omega_r\right\rVert_{\infty}/\left\lVert\omega_m\right\rVert_{\infty}$ decreases exponentially to zero as $n$ increases.
 \begin{figure}
 \begin{center}
          \includegraphics[width=.32\textwidth]{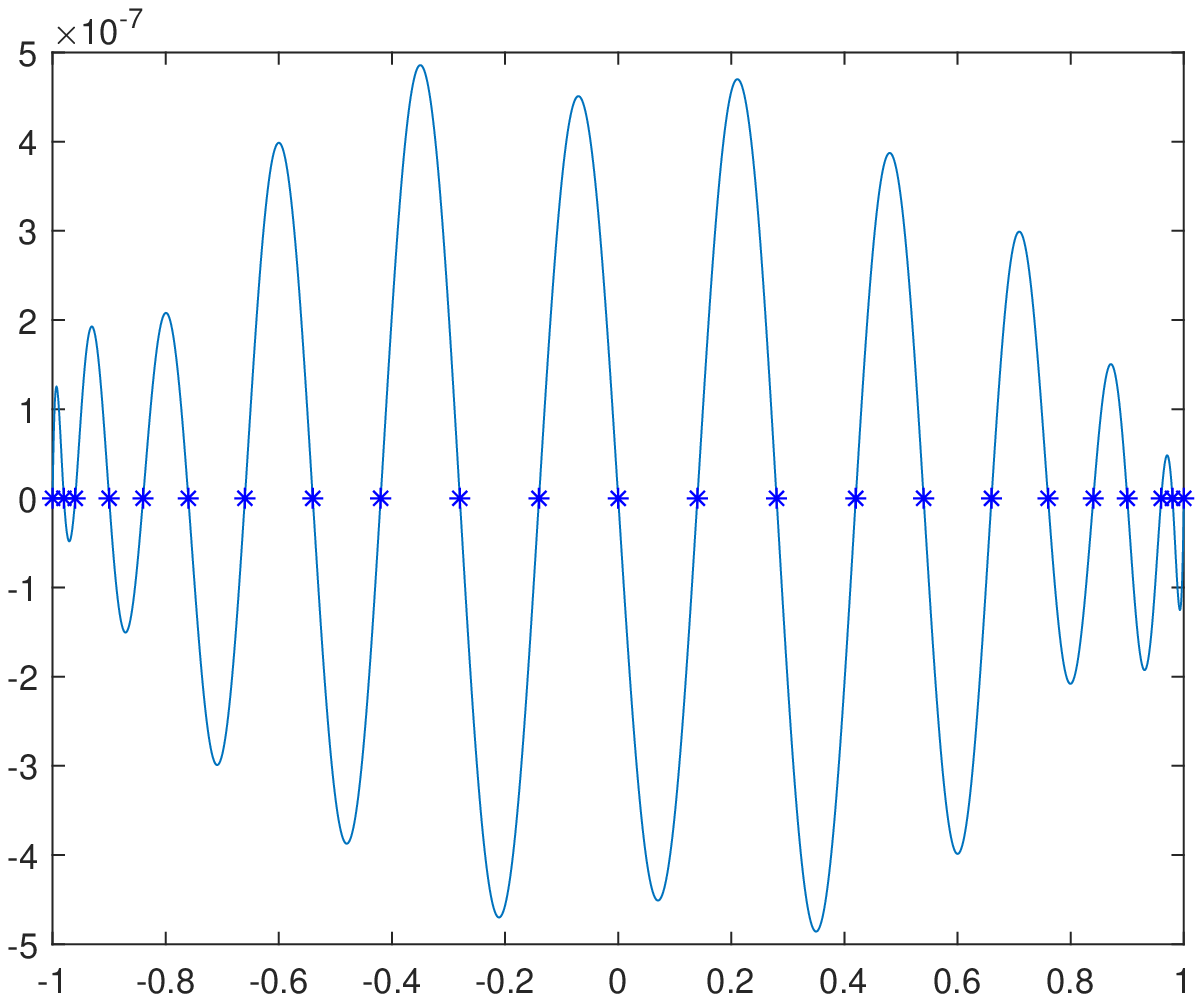}
    \includegraphics[width=.32\textwidth]{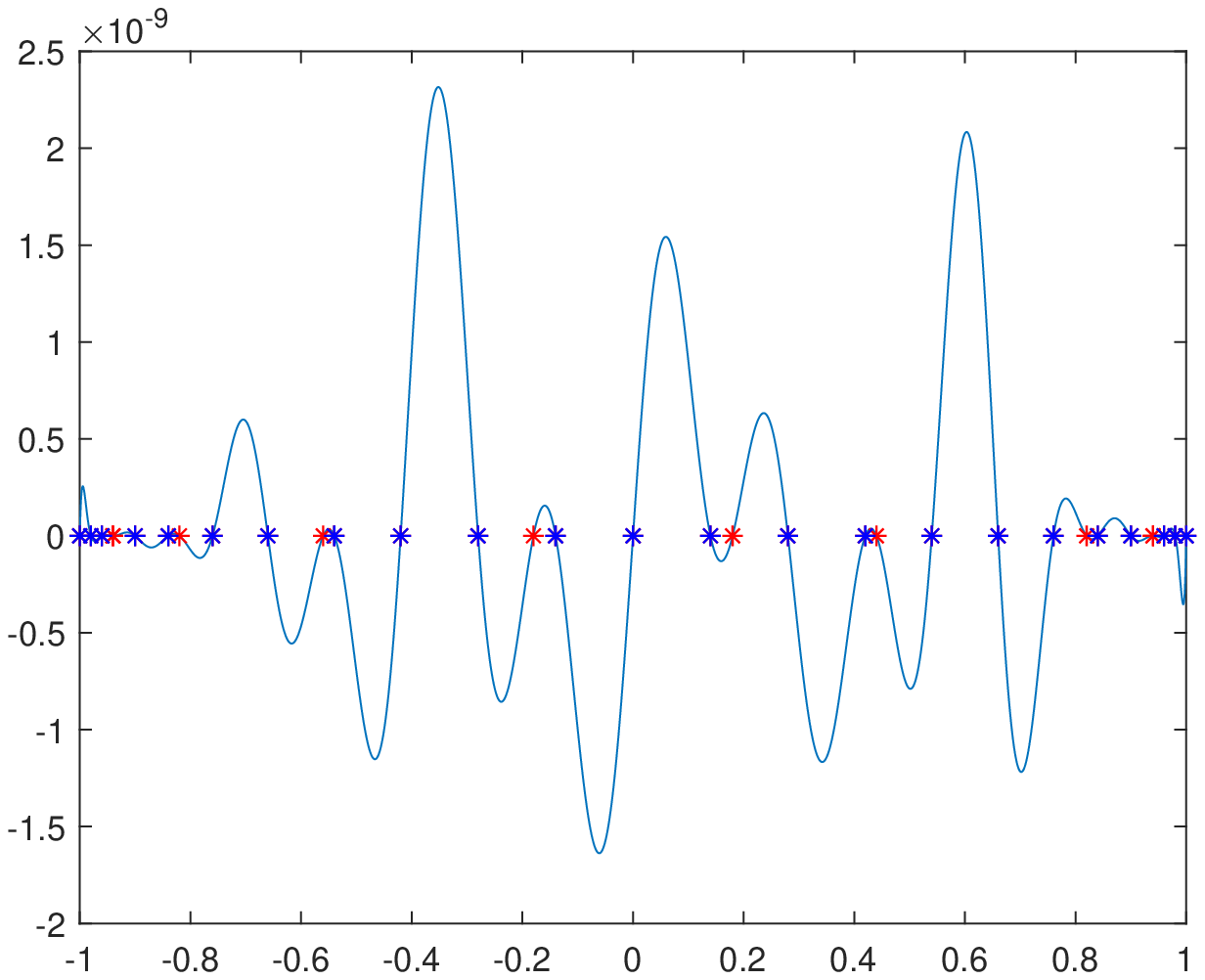} 
       \includegraphics[width=.32\textwidth]{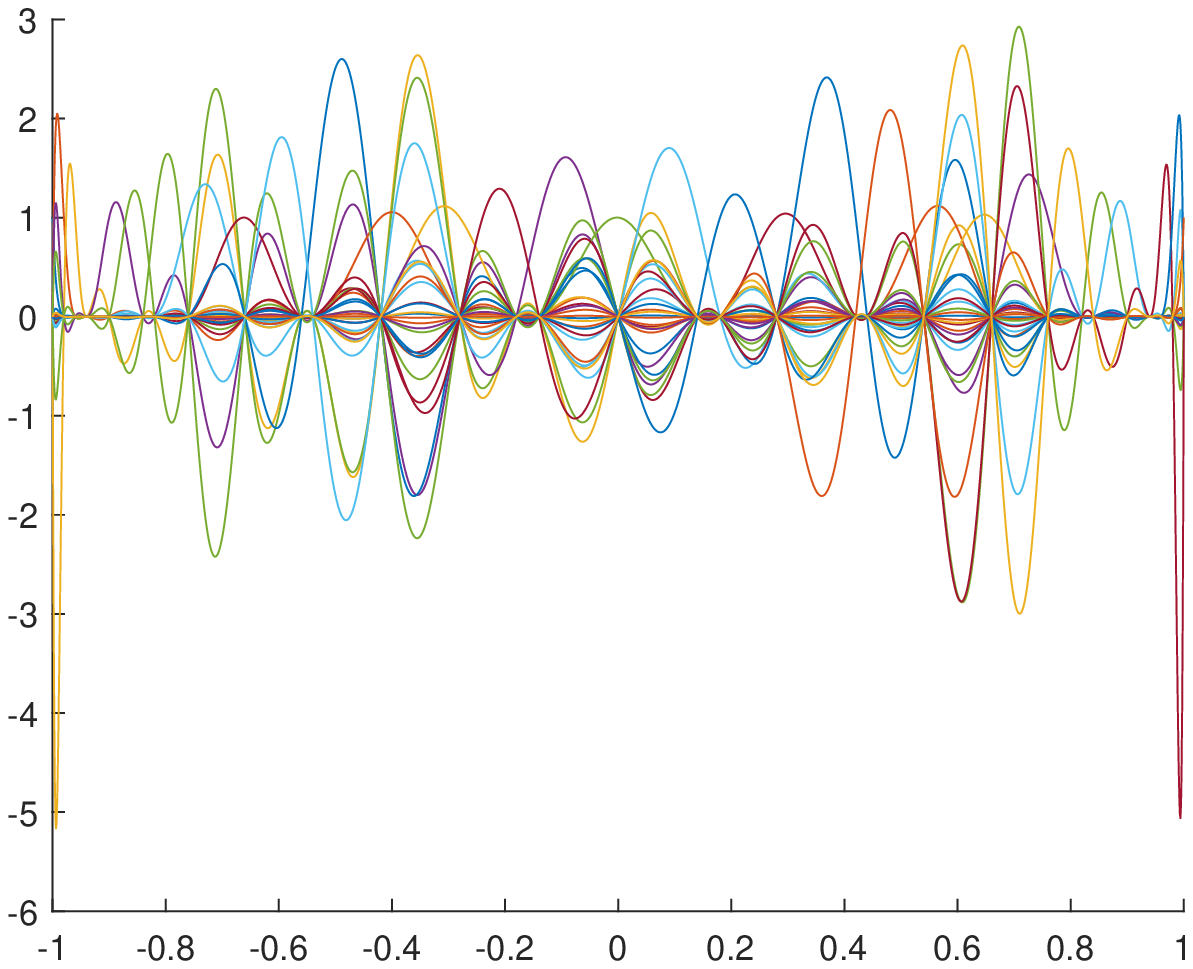} 
         \caption{The nodal polynomial at the mock-Chebyshev nodes $X^{\prime}_m$ (left), the nodal polynomial at the node set $X^{\prime}_m\cup X^{\prime\prime\prime}_p$(center) and the Lagrange fundamental polynomials at the node set $X^{\prime}_m\cup X^{\prime\prime\prime}_p$(right) for $n=100$ (and then $m=22$, $p=9$, $r=32$). }
    \label{Fig:Casen100}
    \end{center}
\end{figure}

 \begin{figure} 
 \begin{center}
          \includegraphics[width=.32\textwidth]{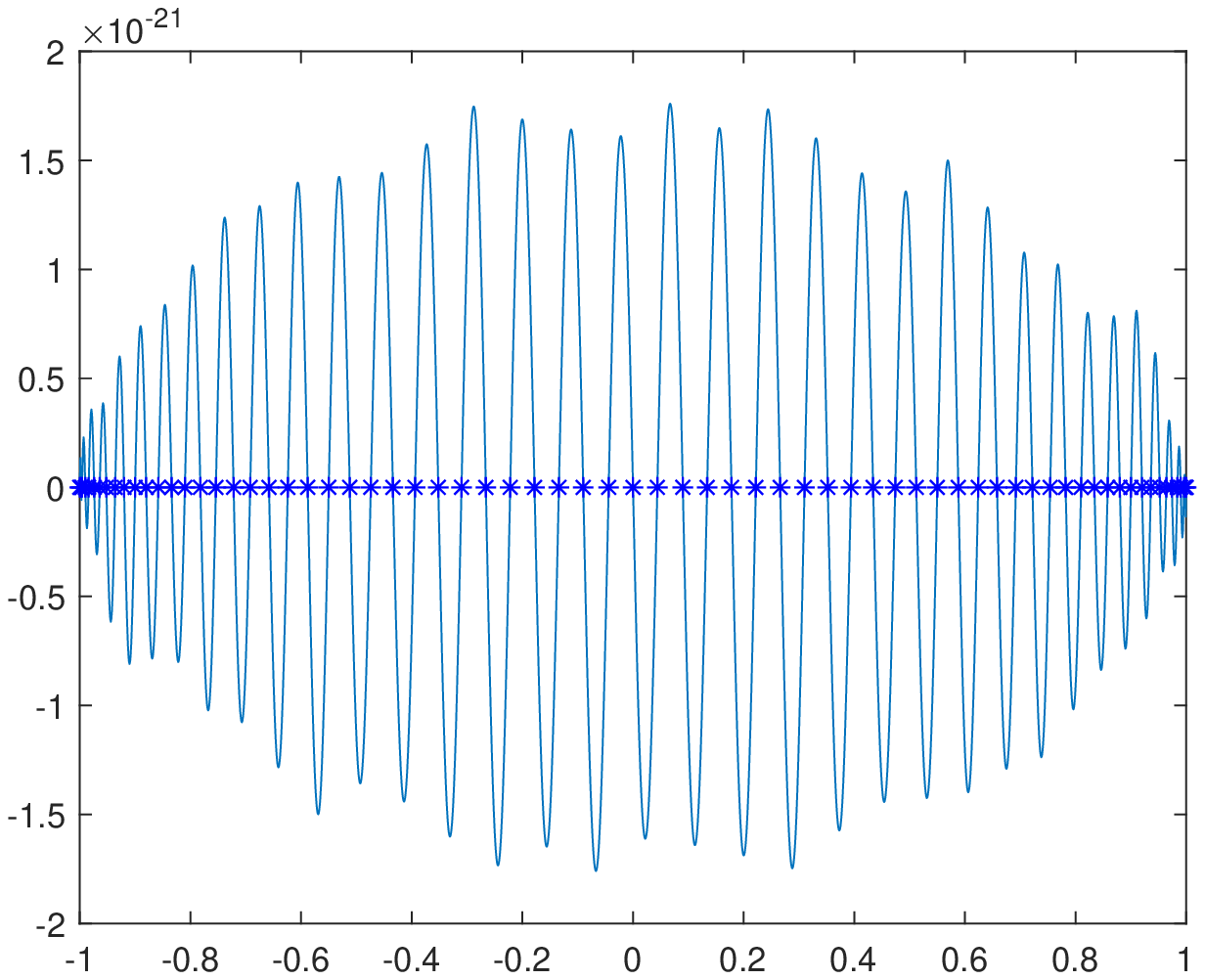}
    \includegraphics[width=.32\textwidth]{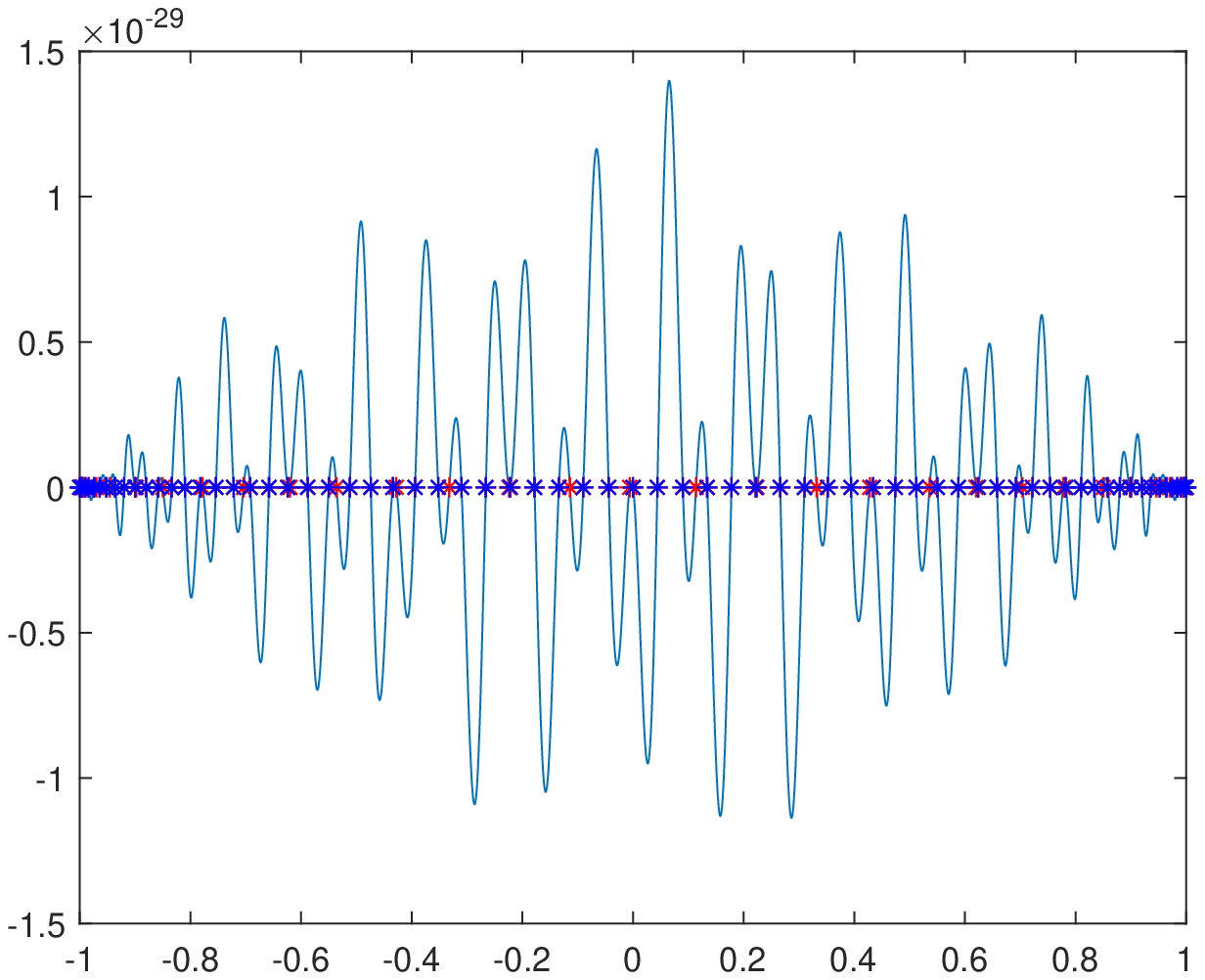} 
    \includegraphics[width=.32\textwidth]{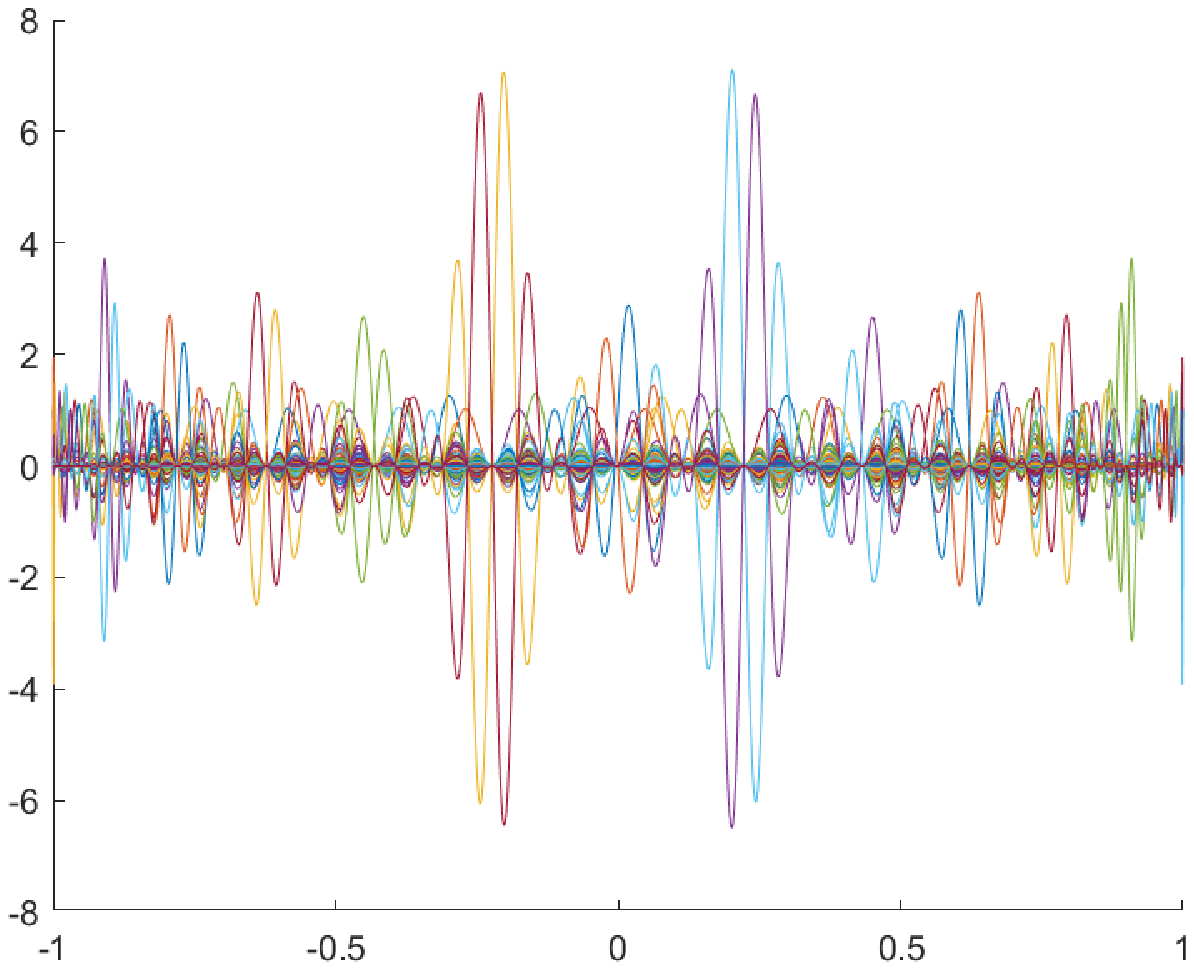}   
         \caption{The nodal polynomial at the mock-Chebyshev nodes $X^{\prime}_m$ (left), the nodal polynomial at the node set $X^{\prime}_m\cup X^{\prime\prime\prime}_p$(center) and the Lagrange fundamental polynomials at the node set $X^{\prime}_m\cup X^{\prime\prime\prime}_p$(right) for $n=1000$, (and then $m=70$, $p=28$, $r=99$). }
    \label{Fig:Casen1000}
    \end{center}
\end{figure}

 \begin{figure} 
 \begin{center}
          \includegraphics[width=.32\textwidth]{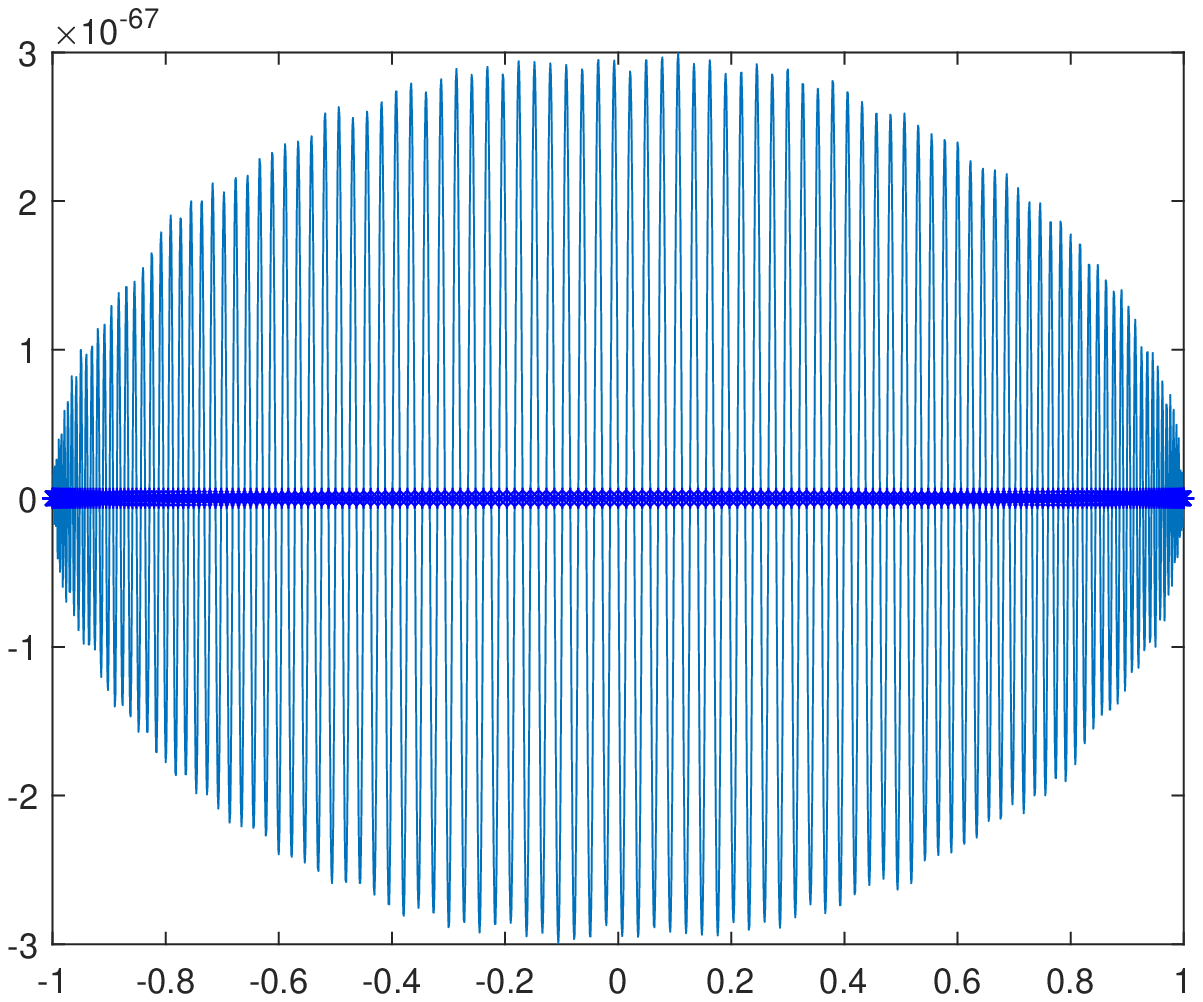}
    \includegraphics[width=.32\textwidth]{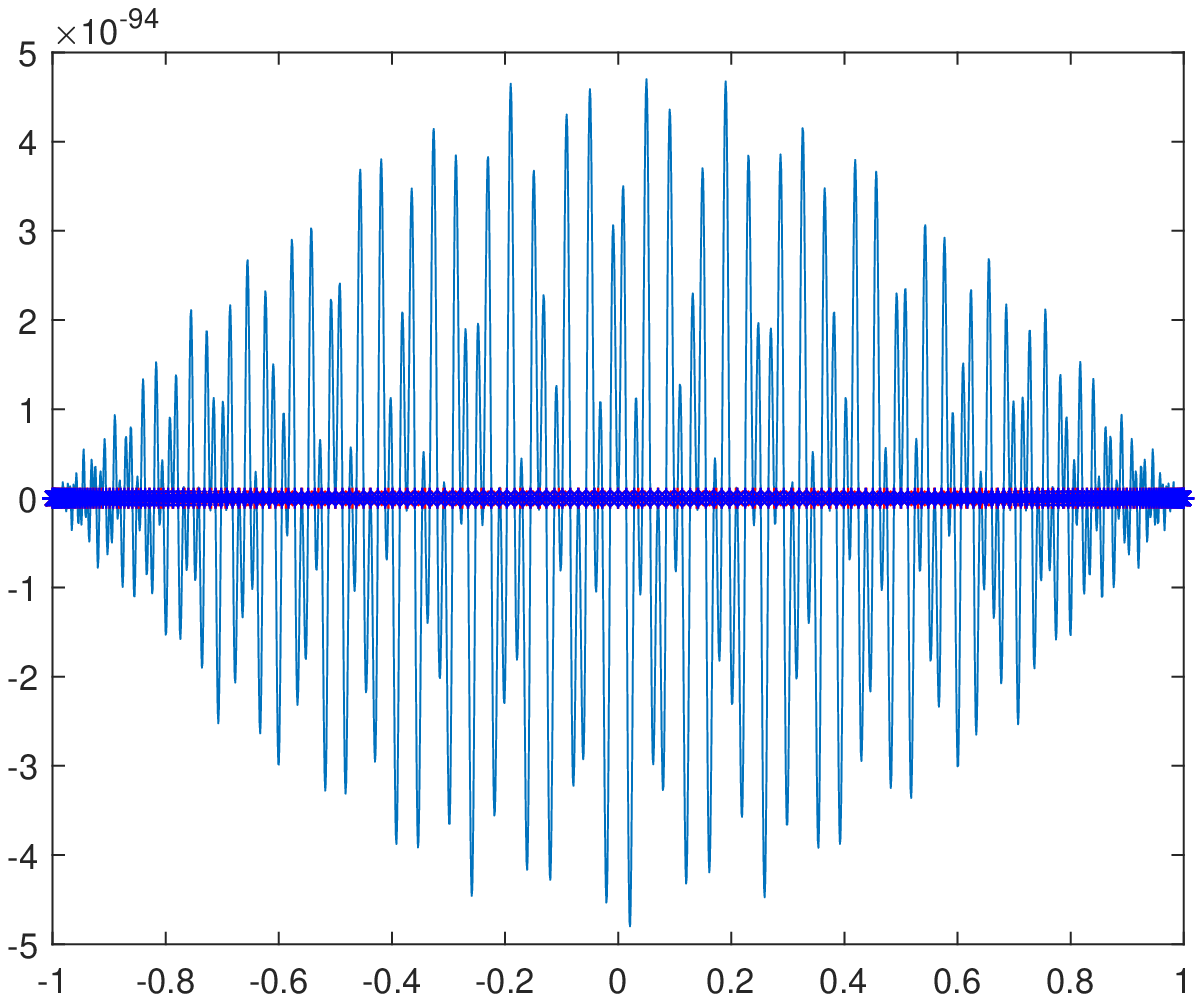} 
    \includegraphics[width=.32\textwidth]{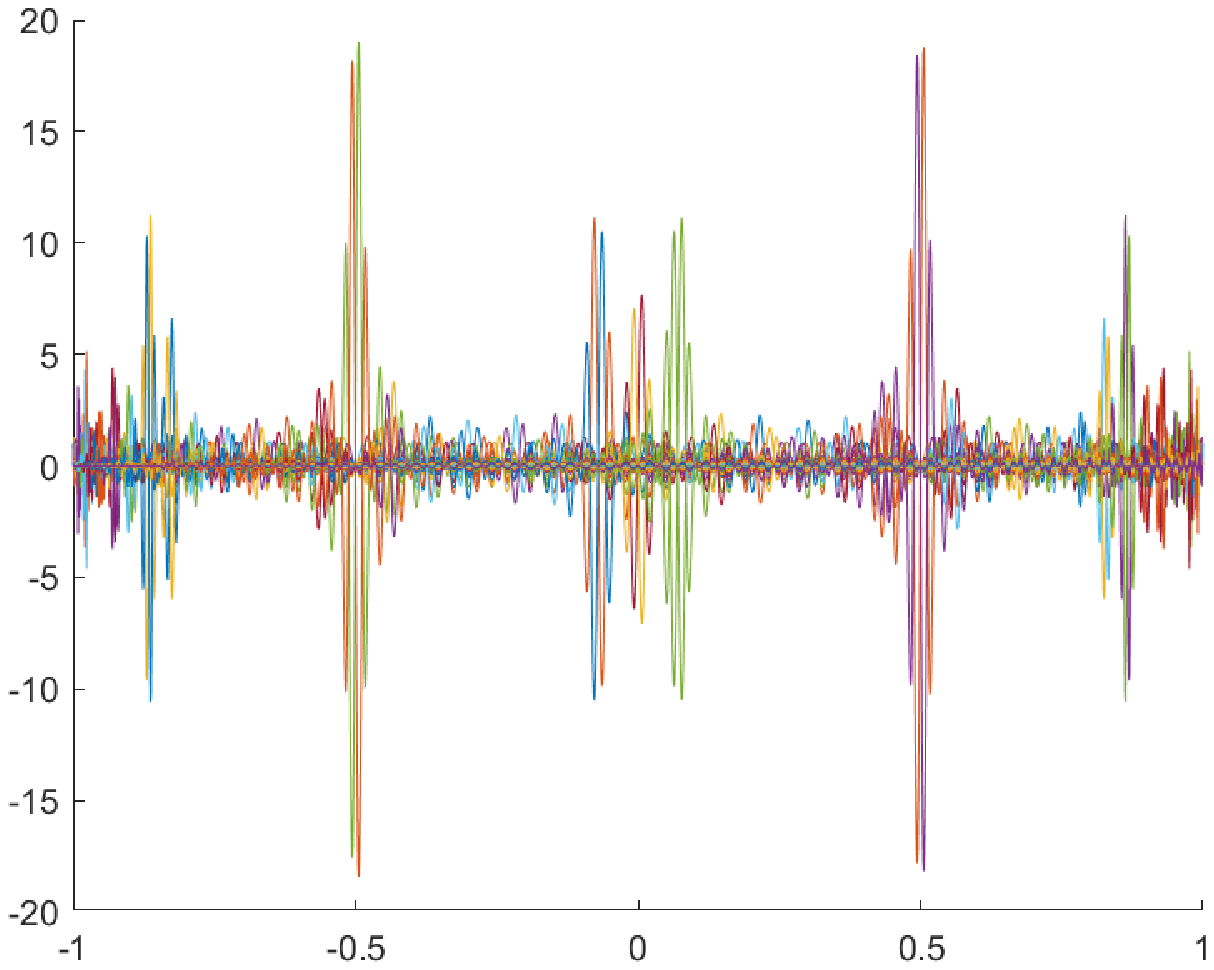}   
         \caption{The nodal polynomial at the mock-Chebyshev nodes $X^{\prime}_m$ (left), the nodal polynomial at the node set $X^{\prime}_m\cup X^{\prime\prime\prime}_p$(center) and the Lagrange fundamental polynomials at the node set $X^{\prime}_m\cup X^{\prime\prime\prime}_p$(right) for $n=10000$, (and then $m=222$, $p=90$, $r=313$). }
    \label{Fig:Casen10000}
    \end{center}
\end{figure}

\section{Numerical Differentiation through constrained mock-Chebyshev least squares operator}
\label{Sec:NumericalDifferentiation }
In this Section we introduce a numerical differentiation formula based on the constrained mock-Chebyshev least squares operator. Let $f\in C^1([-1,1])$ be a differentiable function whose first derivative is continuous on the interval $(-1,1)$. It is worth emphasizing that we are supposing to know exclusively the evaluations of the function $f$ on the set $X_n$ of $n+1$ equispaced nodes. We apply the constrained mock-Chebyshev least squares operator to the function $f$ and we compute the polynomial
\begin{equation}
\label{eq:Constrained_Mock_Cheb_Appr}
    \hat{P}_{r,n}[f](x)=\sum_{i=0}^r \hat{a}_i u_i(x),    \quad x\in[-1,1].
\end{equation}
By differentiating~\eqref{eq:Constrained_Mock_Cheb_Appr}, we obtain 
\begin{equation}
\label{eq:Constrained_Mock_Cheb_ApprDeriv}
    \hat{P}_{r,n}^{\prime}[f](x)=\sum_{i=0}^r \hat{a}_i u_i^{\prime}(x), \quad x\in[-1,1],
\end{equation}
and, since $\hat{P}_{r,n}^{\prime}[f]\in\Pi_{r-1}$, from~\eqref{eq:PropReprodPolyn}, we get
\begin{equation}
\label{eq:GlobPolAppDerivative}
   \hat{P}_{r,n}^{\prime}[f]=\hat{P}_{r,n}\left[ \hat{P}_{r,n}^{\prime}[f]\right]=\sum_{i=0}^r \hat{a}^{\prime}_i u_i(x).
\end{equation}
\begin{remark}
We notice that the coefficients vector $\hat{a}^{\prime}=[\hat{a}^{\prime}_0,\dots,\hat{a}^{\prime}_r]^T$ is  uniquely determined, since the KKT matrix~\eqref{KKTMatrix} has nonzero determinant, see~\cite{DellAccio:2022:GOT}.
\end{remark}

Relations between the vectors of coefficients $\hat{a}=[\hat{a}_0,\dots,\hat{a}_r]^T$ and  $\hat{a}^{\prime}=[\hat{a}^{\prime}_0,\dots,\hat{a}^{\prime}_r]^T$ depends on the chosen polynomial basis $\{u_1(x),\dots,u_r(x)\}$. Since we are assuming to work with the Chebyshev polynomial basis of the first kind $\mathcal{B}_{C,1}$, in the following we make this relation explicit in this particular case. To this aim we recall some useful identities between the Chebyshev polynomials of first kind $T_k(x)$, $k=0,\dots, r$, and the Chebyshev polynomial of the second kind $U_k(x)$, $k=0,\dots,r$. This polynomials are defined by~\cite{Rivlin:2020CP}
$$U_k(x)=\frac{\sin\left((k+1)\arccos x\right)}{\sin\left(\arccos x \right)}, \quad x\in[-1,1], \quad k=0,\dots, r, $$
and satisfy the following relations
\begin{equation}
\label{eq:RelFirstSecondChebPolFI}
   U_0(x)=T_0(x)=1, \quad U_1(x)=2T_1(x), \quad U_k(x)-U_{k-2}(x)=2T_k(x), \quad k\ge2.
\end{equation}
Moreover 
\begin{equation}
\label{eq:RelFirstSecondChebPol}
   T_k^{\prime}(x)=kU_{k-1}(x), \quad U_k^{\prime}(x)=\frac{(k+1)T_{k+1}(x)-xU_k(x)}{x^2-1}.
\end{equation}

\begin{theorem}
\label{Thm:RelCoefficients}
Let be $f\in C([-1,1])$, by expressing the polynomial $\hat{P}_{r,n}[f]$ and its first derivative $\hat{P}_{r,n}^{\prime}[f]$ in the basis $\mathcal{B}_{C,1}$ as in equations~\eqref{eq:Constrained_Mock_Cheb_Appr} and~\eqref{eq:GlobPolAppDerivative}, respectively, we have 
\begin{equation}
\label{relVectorOfCoeff}
 \hat{a}^{\prime}_0= \sum_{j=0}^{\left\lfloor \frac{r}{2}\right\rfloor}(2j+1)\hat{a}_{2j+1}, \qquad    \hat{a}^{\prime}_i=2\sum_{j=0}^{\left\lfloor \frac{r-i}{2}\right\rfloor}(i+2j+1)\hat{a}_{i+2j+1}, \quad i=1,\dots,r.
\end{equation}
\end{theorem}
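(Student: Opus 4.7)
The plan is to differentiate the Chebyshev expansion of $\hat{P}_{r,n}[f]$ term by term, convert each derivative back into the basis $\mathcal{B}_{C,1}$ by means of the identities in~\eqref{eq:RelFirstSecondChebPolFI} and~\eqref{eq:RelFirstSecondChebPol}, and then read off coefficients from the unique expansion in~\eqref{eq:GlobPolAppDerivative}.

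First, I would start from $\hat{P}_{r,n}[f](x)=\sum_{i=0}^{r}\hat{a}_{i}T_{i}(x)$, differentiate termwise, and apply the identity $T_k^{\prime}(x)=kU_{k-1}(x)$ to obtain
$$\hat{P}_{r,n}^{\prime}[f](x)=\sum_{i=1}^{r} i\,\hat{a}_{i}\,U_{i-1}(x).$$
Next, I would expand each $U_{k}$ in the first-kind basis by iterating the three-term relation $U_k - U_{k-2}=2T_k$ from~\eqref{eq:RelFirstSecondChebPolFI}, starting from $U_0=T_0$ and $U_1=2T_1$. A short induction on $j$ yields the closed forms
$$U_{2j}(x)=T_{0}(x)+2\sum_{\ell=1}^{j}T_{2\ell}(x), \qquad U_{2j+1}(x)=2\sum_{\ell=0}^{j}T_{2\ell+1}(x).$$

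Substituting these into the derivative, interchanging the order of summation, and collecting the coefficient of each $T_s$ is then routine. For $s=0$, only indices $i$ with $i$ odd (so that $i-1$ is even) contribute, each bringing a factor of $1$, which yields the first formula in~\eqref{relVectorOfCoeff}, provided one adopts the convention $\hat{a}_{k}=0$ for $k>r$. For $s\geq 1$, the contributing indices are $i\geq s+1$ with $i\equiv s+1\pmod 2$, each with coefficient $2$; writing $i=s+1+2j$ and relabelling $s$ as $i$ produces the second formula. Uniqueness of representation in $\mathcal{B}_{C,1}$ identifies the collected coefficients with the $\hat{a}_i^{\prime}$'s of~\eqref{eq:GlobPolAppDerivative}.

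The main difficulty is bookkeeping rather than analytic. Two points require care: the coefficient of $T_0$ in $U_{2j}$ is $1$ and not $2$, which is exactly what removes the factor $2$ in the formula for $\hat{a}_0^{\prime}$; and the nominal upper summation limit $\lfloor (r-i)/2\rfloor$ in~\eqref{relVectorOfCoeff} must be reconciled with the genuine constraint $i+2j+1\le r$, the gap being absorbed by the convention $\hat{a}_{k}=0$ for $k>r$. Note that the second identity in~\eqref{eq:RelFirstSecondChebPol} is not needed for this proof: only $T_k^{\prime}=kU_{k-1}$ and the parity-resolved $T$-expansion of $U_{k}$ are used.
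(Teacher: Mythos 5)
Your proof is correct and follows essentially the same route as the paper's: differentiate the expansion termwise, use $T_k^{\prime}=kU_{k-1}$, and convert back to the first-kind basis via the relations in~\eqref{eq:RelFirstSecondChebPolFI}. The only difference is that you make explicit (via the closed forms for $U_{2j}$ and $U_{2j+1}$ and the coefficient collection) the final step that the paper dispatches with ``the result follows from the identity~\eqref{eq:RelFirstSecondChebPolFI}.''
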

\begin{proof}
By~\eqref{eq:Constrained_Mock_Cheb_Appr}, the polynomial $\hat{P}_{r,n}[f]$ in the basis $\mathcal{B}_{C,1}$ has the form
\begin{equation*}
    \hat{P}_{r,n}[f](x)=\sum_{j=0}^r \hat{a}_j T_j(x),    \quad x\in[-1,1].
\end{equation*}
By using the identities ~\eqref{eq:Constrained_Mock_Cheb_ApprDeriv} and~\eqref{eq:RelFirstSecondChebPol}, we get
\begin{equation}
    \hat{P}_{r,n}^{\prime}[f](x)=\sum_{j=0}^r \hat{a}_j T^{\prime}_j(x)=\sum_{j=1}^r j\hat{a}_j U_{j-1}(x),    \quad x\in[-1,1].
\end{equation}
By setting $\hat{a}_{r+1}=0$, after a change the dummy index, the polynomial $\hat{P}_{r,n}^{\prime}[f]$ can be written as 
\begin{equation}
    \hat{P}_{r,n}^{\prime}[f](x)=\sum_{j=0}^r (j+1)\hat{a}_{j+1}U_{j}(x),    \quad x\in[-1,1].
\end{equation}
The result follows from the identity~\eqref{eq:RelFirstSecondChebPolFI}.
\end{proof}

% \begin{corollary}
% \label{cor:Bounda1}
% Let be $f\in C([-1,1])$, by expressing the polynomial $\hat{P}_{r,n}[f]$ and its first derivative $\hat{P}_{r,n}^{\prime}[f]$ in the basis $\mathcal{B}_{C,1}$, we have
% \begin{equation*}
%     \left\lVert \hat{a}^\prime\right \rVert_1 \le (r+1)(2r+1)\left\lVert \hat{a}\right \rVert_1.
% \end{equation*}
% \end{corollary}
% \begin{proof}
% By using Theorem~\ref{Thm:RelCoefficients}, we get
% \begin{eqnarray*}
%     \left\lVert \hat{a}^\prime\right \rVert_1&=& \sum_{k=0}^r \left \lvert \hat{a}^{\prime}_k \right \rvert= \left \lvert \hat{a}^{\prime}_0 \right\rvert +\sum_{k=1}^r \left \lvert \hat{a}^{\prime}_k \right \rvert\\
%     &\le& \sum_{j=0}^{\lfloor \frac{r}{2}\rfloor}(2j+1) \left\lvert \hat{a}_{2j+1} \right\rvert+2\sum_{k=1}^r\sum_{j=0}^{\left\lfloor \frac{r-k}{2}\right\rfloor}(k+2j+1)\left\lvert\hat{a}_{k+2j+1}\right\rvert\\
%     &\le& (r+1)\sum_{j=0}^{\lfloor \frac{r}{2}\rfloor}\left\lvert \hat{a}_{2j+1} \right\rvert+2(r+1)\sum_{k=1}^r\sum_{j=0}^{\left\lfloor \frac{r-k}{2}\right\rfloor}\left\lvert\hat{a}_{k+2j+1}\right\rvert\\
%     &\le& (r+1)\left\lVert \hat{a}\right \rVert_1+2(r+1)r\left\lVert \hat{a}\right \rVert_1=(r+1)(2r+1)\left\lVert \hat{a}\right \rVert_1.
% \end{eqnarray*}
% \end{proof}
From the above results, one can deduce that there are two different strategies in order to compute the analytic expression~\eqref{eq:GlobPolAppDerivative} of the polynomial $\hat{P}^{\prime}_{r,n}[f]$ in the Chebyshev polynomial basis of the first kind. 
\begin{itemize}
    \item[$S_1$)] Evaluate $U_k(x)$, $k=0,\dots,r$, on the set $X_n$. Compute the values of $\hat{P}^{\prime}_{r,n}[f]$ on the equispaced nodes using~\eqref{eq:Constrained_Mock_Cheb_ApprDeriv}. Solve the KKT linear equations~\eqref{Cons_Least_Squares_Prob} in order to compute $\hat{P}_{r,n}\left[\hat{P}^{\prime}_{r,n}[f]\right]$ from the values of $\hat{P}^{\prime}_{r,n}[f]$ on the equispaced nodes. 
     \item[$S_2$)] Use the equation~\eqref{relVectorOfCoeff} in order to compute the analytic expression of $\hat{P}^{\prime}_{r,n}[f]$.
\end{itemize}
We notice that, although the strategy $S_2$ is more direct with respect to the strategy $S_1$, it can be applied only in the case of the Chebyshev polynomial basis of the first kind. In the next Section, we will show that the two strategies are equivalent in terms of accuracy of results.

We emphasize, that formula~\eqref{eq:GlobPolAppDerivative}  provides a global polynomial approximation of the first derivative of the function $f$.
 Clearly, it is possible to repeat both procedures to approximate the derivative of order $k$, for $k\ge1$, by supposing that $f\in C^k([-1,1])$. In this regard, the following Theorem holds.
\begin{theorem}
\label{Thm:RelCoefficientsk}
Let be $f\in C([-1,1])$. We express the polynomial $\hat{P}_{r,n}[f]$ and its successive derivatives in the basis $\mathcal{B}_{C,1}$, that is
\begin{equation*}
    \hat{P}_{r,n}^{(\nu)}[f](x)=\sum_{i=0}^r \hat{a}_i^{(\nu)}T_i(x), \quad x\in[-1,1], \quad \nu=1,\dots,r.
\end{equation*}
For each $\nu\ge 1$, we get
\begin{equation}
\label{relVectorOfCoeffkk}
 \hat{a}^{(\nu)}_0= \sum_{j=0}^{\left\lfloor \frac{r}{2}\right\rfloor}(2j+1)\hat{a}^{(\nu-1)}_{2j+1}, \qquad    \hat{a}^{(\nu)}_i=2\sum_{j=0}^{\left\lfloor \frac{r-i}{2}\right\rfloor}(i+2j+1)\hat{a}^{(\nu-1)}_{i+2j+1}, \quad i=1,\dots,r.
\end{equation}
\end{theorem}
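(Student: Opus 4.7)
The plan is to proceed by induction on $\nu$, using Theorem~\ref{Thm:RelCoefficients} both as the base case and as the engine of the inductive step. The key observation is that the proof of Theorem~\ref{Thm:RelCoefficients} never uses any specific property of the constrained mock-Chebyshev least squares operator beyond the fact that $\hat{P}_{r,n}[f]$ is expanded in the basis $\mathcal{B}_{C,1}$; the argument is a purely algebraic manipulation turning any polynomial expansion $\sum_{j=0}^r c_j T_j(x)$ into the Chebyshev expansion of its derivative by means of the identities~\eqref{eq:RelFirstSecondChebPol} and~\eqref{eq:RelFirstSecondChebPolFI}.

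The base case $\nu = 1$ is exactly Theorem~\ref{Thm:RelCoefficients}, after setting $\hat{a}^{(0)}_i := \hat{a}_i$ and $\hat{a}^{(1)}_i := \hat{a}'_i$.

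For the inductive step, I would assume that for some $\nu \geq 2$ the expansion $\hat{P}^{(\nu-1)}_{r,n}[f](x) = \sum_{i=0}^r \hat{a}^{(\nu-1)}_i T_i(x)$ is already in hand. I would then regard $\hat{P}^{(\nu-1)}_{r,n}[f]$ as a generic element of $\Pi_r$ written in the basis $\mathcal{B}_{C,1}$ and apply verbatim the algebraic chain of steps used in the proof of Theorem~\ref{Thm:RelCoefficients}: differentiate termwise to obtain $\sum_{j=1}^r j\,\hat{a}^{(\nu-1)}_j U_{j-1}(x)$, shift the dummy index (setting $\hat{a}^{(\nu-1)}_{r+1}=0$) to rewrite this as $\sum_{j=0}^r (j+1)\hat{a}^{(\nu-1)}_{j+1}U_j(x)$, and finally collect coefficients of each $T_i$ via $U_0=T_0$, $U_1 = 2T_1$ and $2T_k = U_k - U_{k-2}$ for $k\geq 2$. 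This produces precisely the recurrences~\eqref{relVectorOfCoeffkk} for $\hat{a}^{(\nu)}_i$ in terms of $\hat{a}^{(\nu-1)}_i$.

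There is essentially no substantial obstacle; the one bookkeeping remark worth making is that $\hat{P}^{(\nu-1)}_{r,n}[f]$ has degree at most $r-\nu+1$, so the coefficients $\hat{a}^{(\nu-1)}_i$ with $i > r-\nu+1$ vanish. This is harmless: the sums in~\eqref{relVectorOfCoeffkk} formally still run up to $i=r$, but the extra terms contribute zero and the formula is unaffected. In substance, the theorem asserts that the one-step recurrence of Theorem~\ref{Thm:RelCoefficients} can be iterated, and the inductive proof simply makes this iteration explicit.
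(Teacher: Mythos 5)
Your proposal is correct and takes essentially the same route as the paper: the authors simply state that the proof ``follows the same argument of Theorem~\ref{Thm:RelCoefficients}'' and omit it, which is precisely the observation you make explicit -- that the one-step algebraic recurrence applies to any polynomial in $\Pi_r$ expressed in the basis $\mathcal{B}_{C,1}$ and can therefore be iterated. Your inductive formulation and the remark about the vanishing high-order coefficients are sound and add nothing beyond what the paper intends.
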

\begin{proof}
The proof follows the same argument of Theorem~\ref{Thm:RelCoefficients}.
It is therefore omitted here.
\end{proof}

\section{Numerical experiments}
\label{Sec:NumericalExp}
In this Section, we numerically prove the accuracy of the proposed method by several examples. The numerical experiments are performed using \texttt{MatLab} software. In particular, the command \texttt{derivative} is used in order to compute the exact successive derivatives of all considered functions and the Chebfun package is used in order to compute the Chebyshev polynomial basis of the first kind~\cite{Hale:2013:FAA}.

We perform two different types of numerical tests. In the first test, we consider the function 
\begin{equation*}
    f_1(x)=xe^{-2x}+\sin(3x)
\end{equation*}
used in~\cite{Li:2005:GED} in order to test general explicit finite difference formulas with arbitrary order accuracy for approximating first and higher derivatives. These formulas are applicable to unequally or equally spaced data. In line with the experiments presented in~\cite{Li:2005:GED}, we consider a set of $n+1=67$ equispaced points in the interval $[-1,1]$, in order to have a stepsize $h=0.03$.
We compute the errors
\begin{equation}
\label{eq:Mean_approx_error}
  e_{mean}:=\frac{1}{N}\sum_{i=1}^N e_i, \qquad e_{max}:=\max_{i=1,\dots,N} e_i,
\end{equation}
obtained in approximating
the first four order derivatives of the function $f_1$ by the constrained mock-Chebyshev least squares operator on the uniform grid of $67$ points in $[-1,1]$, computed by following the strategy $S_2$. In equation~\eqref{eq:Mean_approx_error} $e_i$ is the absolute value of the approximation error at the $i$-th point of this grid. The numerical results are reported in Table~\ref{tab:InAnalogytoJinping}. The approximation accuracies are comparable or even better with respect to those one reported in~\cite{Li:2005:GED} for the case of the finite difference formula at $11$ equally spaced points with  stepsize $h=0.03$ in the interval $[0,0.3]$. To better appreciate the behavior of the approximation errors in the whole interval $[-1,1]$, in Figure~\ref{Fig_Eerrortrend} we plot the absolute values of the pointwise errors computed on the equispaced grid of $N=201$ points for the first four order derivatives of the function $f_1$.  The plots are displayed in a lexicographic order, by increasing the order of derivatives. The red  dash-dotted line is the error of approximation related to the application of the strategy $S_1$ while the black dashed line  is the error of approximation related to the application of the strategy $S_2$. From the Figure, it is evident that the application of the two strategies $S_1$ and $S_2$ gives practically the same results.

\begin{table}
\begin{center}
\label{tab:InAnalogytoJinping}%
\begin{tabular}{cccccc}
 $order$  & 0 & 1  & 2 & 3 & 4  \\
 \hline
  $e_{mean}$ & 1.24e-15 &  7.59-14 & 9.02-12 &   9.92e-10 &  8.57e-08\\
 $e_{max}$ &  1.77e-14 & 4.43e-12  & 7.46e-10 & 7.67e-08 &    5.78e-06  \\
\end{tabular}
\caption{Mean and max approximation errors obtained in approximating the function $f_1(x)=xe^{-2x}+\sin(3x)$ and its first four order derivatives on the grid of $67$ equispaced points in $[-1,1]$, by using the constrained mock-Chebyshev least squares operator with $n+1=67$. }
\end{center}
\end{table}

\begin{figure} 
\begin{center}
    \includegraphics[width=.49\textwidth]{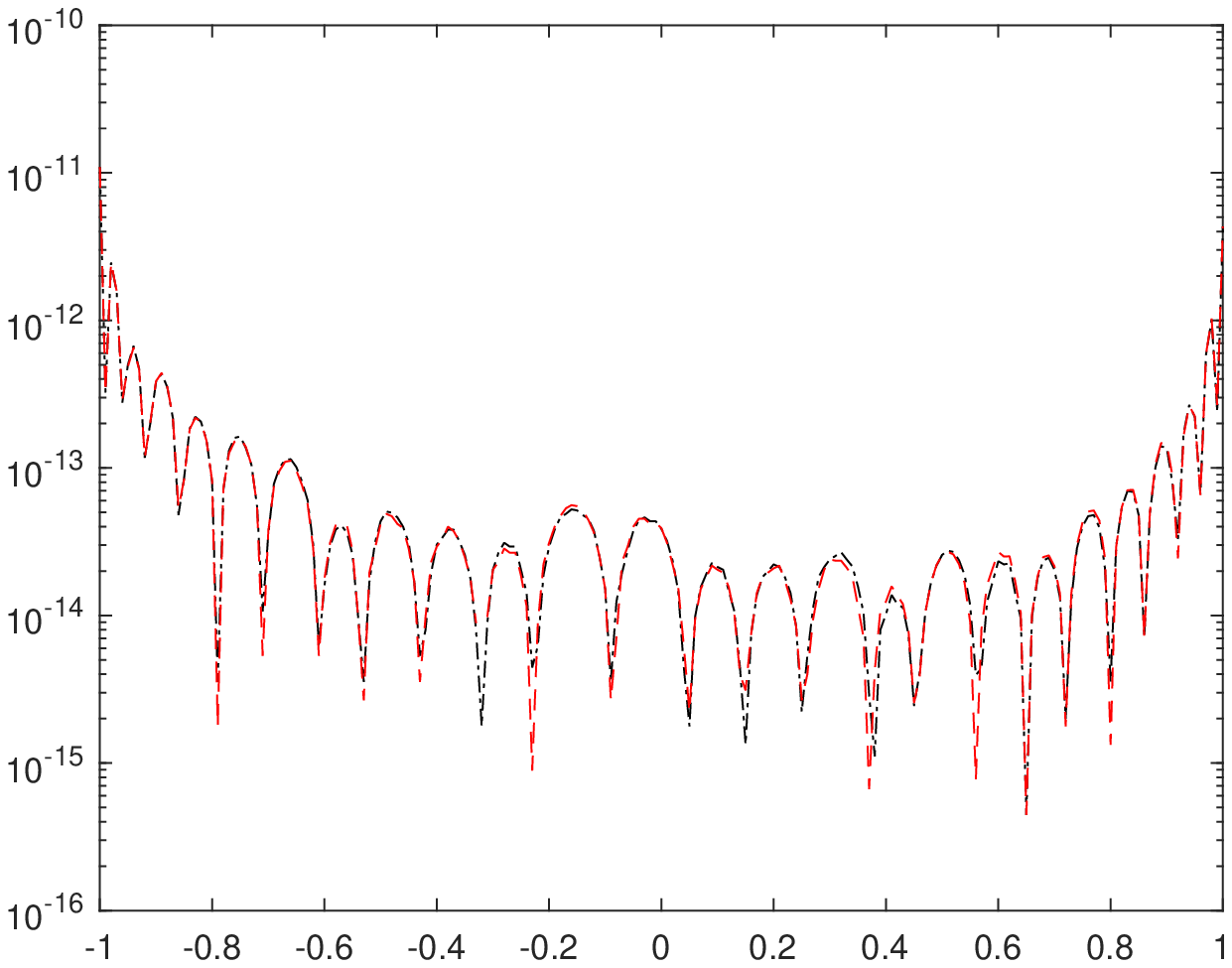} 
        \includegraphics[width=.49\textwidth]{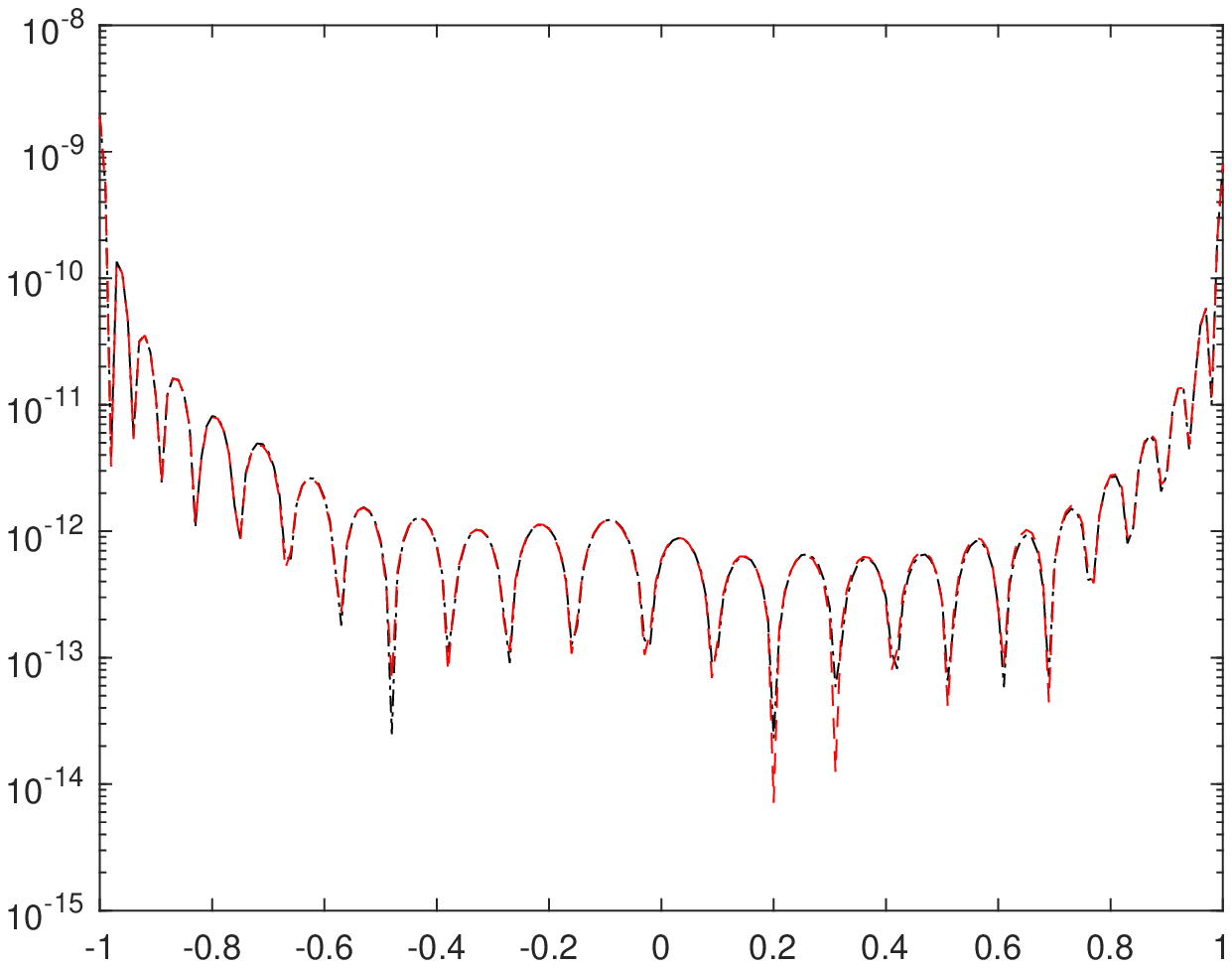}
            \includegraphics[width=.49\textwidth]{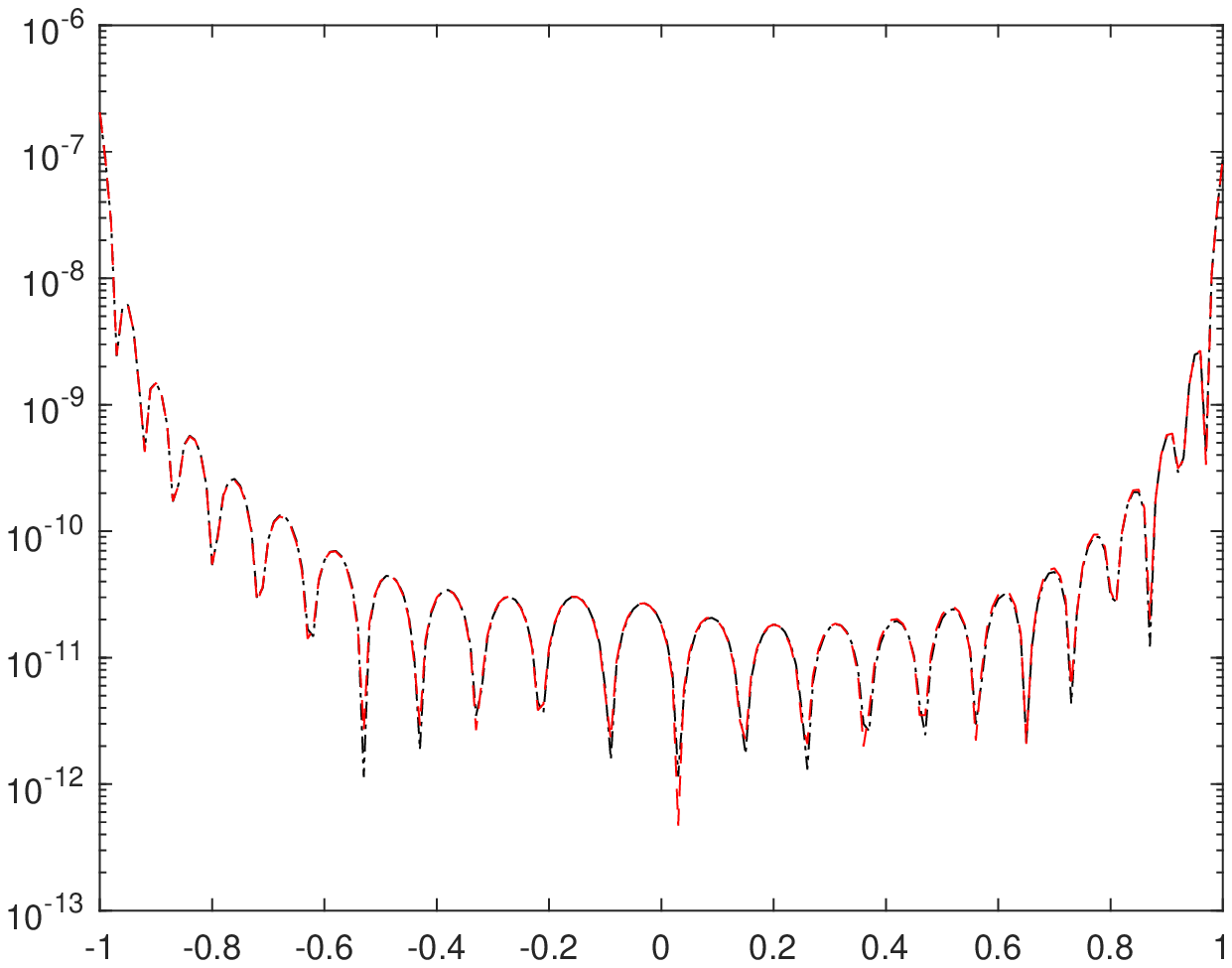}
                \includegraphics[width=.49\textwidth]{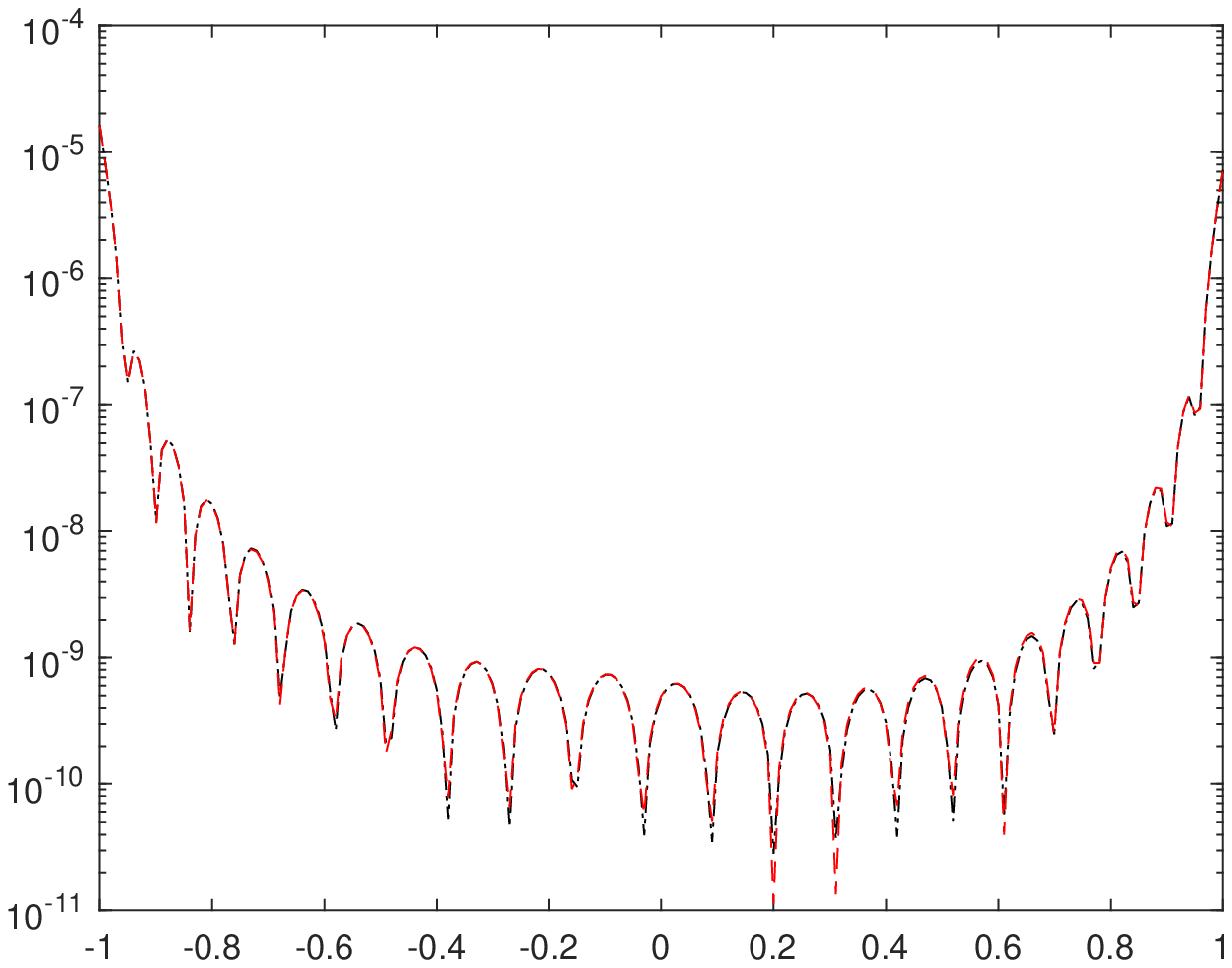} 
         \caption{Behavior of the approximation errors, in absolute value, of the first four order derivatives of the function  $f_1(x)=xe^{-2x}+\sin(3x)$ in the whole interval $[-1,1]$, by using the constrained mock-Chebyshev least squares operator with $n+1=67$.  The absolute values of the pointwise errors are computed on the equispaced grid of $N=201$ points. The plots are displayed in a lexicographic order, by increasing the order of derivatives. From the plots, it is evident that the application of the two strategies $S_1$ (red  dash-dotted line) and $S_2$ (black dashed line) gives practically the same results.}
         \label{Fig_Eerrortrend}
\end{center}
\end{figure}

In the second type of test, we consider the following functions~\cite{Li:2005:GED, Klein:2012:LRF}
\begin{eqnarray*}
 && f_1(x)=xe^{-2x}+\sin(3x), \quad f_2(x)=e^{-50(x-0.4)^2}+\sinh(x),\\
&& f_3(x)=\frac{1}{1+8x^2}, \quad f_4(x)=\frac{1}{1+25x^2}, 
\end{eqnarray*}
% \begin{eqnarray*}
% f_2(x)=\frac{1}{1+25x^2}, \quad f_3(x)=e^{-50(x-0.4)^2}+\sinh(x),
% \end{eqnarray*}
and we analyze the trend of the mean approximation errors and the maximum approximation errors~\eqref{eq:Mean_approx_error} obtained in approximating the first four order derivatives of $f_1-f_4$ by using the constrained mock-Chebyshev least squares operator on uniform grids of different stepsize. In particular, we consider sets of $n+1$ equispaced nodes with $n=50k$,  $k=1,\dots,80$ and compute the errors on a grid of $N=10^5$ random points of the interval $[-1,1]$.

 The results of the tests are shown in Figures~\ref{figureErrorg},~\ref{figureErrorf8},~\ref{figureErrorf1}~\ref{figureErrorf4}. All these examples show, with clear evidence, that once the maximum precision is reached for $\hat{P}_{r,n}[f]$, the increase in the number of nodes does not lead to more accurate approximation for the derivatives, on the contrary, the increase of the condition number of the matrices involved in the strategies $S_1$ and $S_2$ causes worsening of results.

 \begin{figure} 
 \begin{center}
          \includegraphics[width=.49\textwidth]{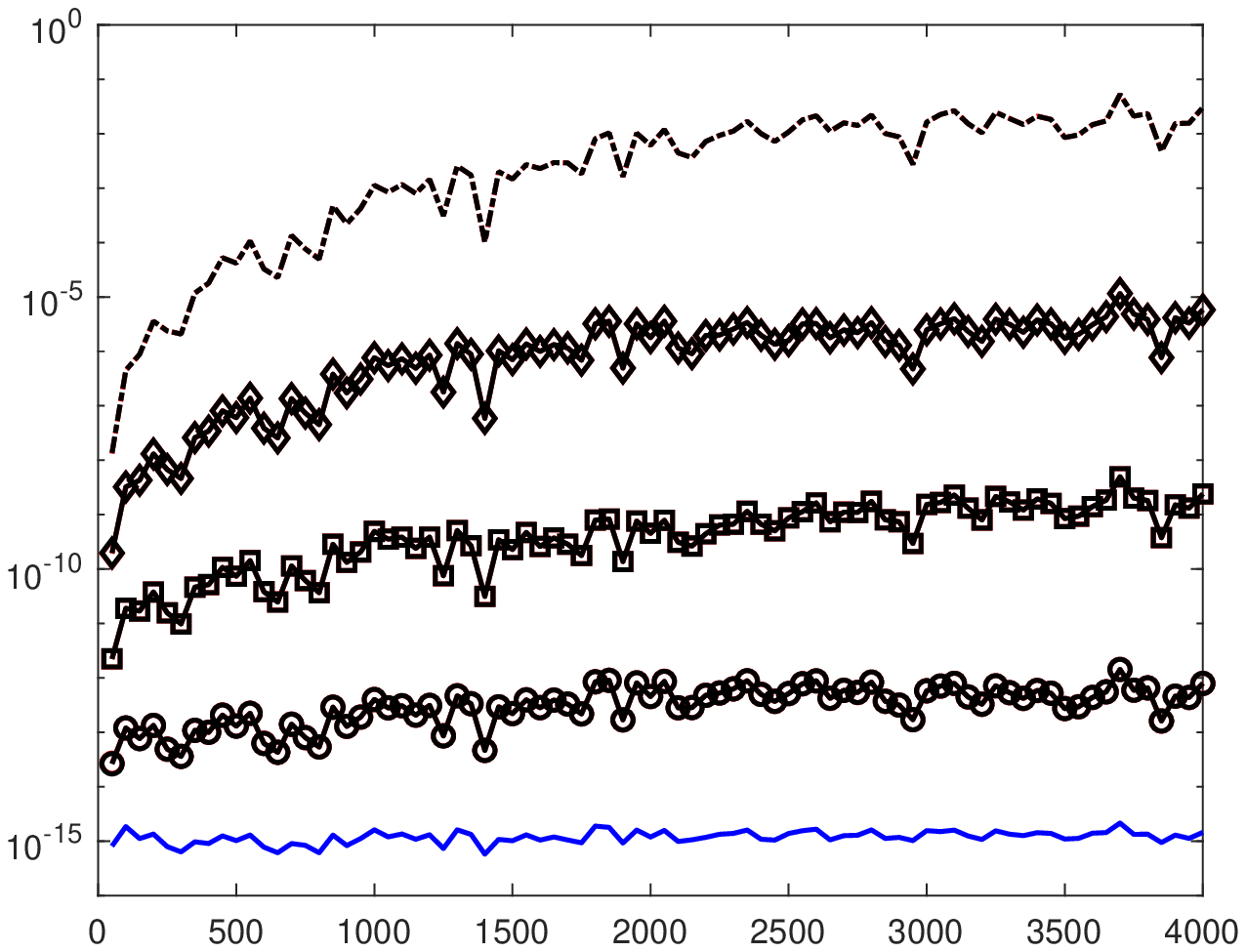}
          \includegraphics[width=.49\textwidth]{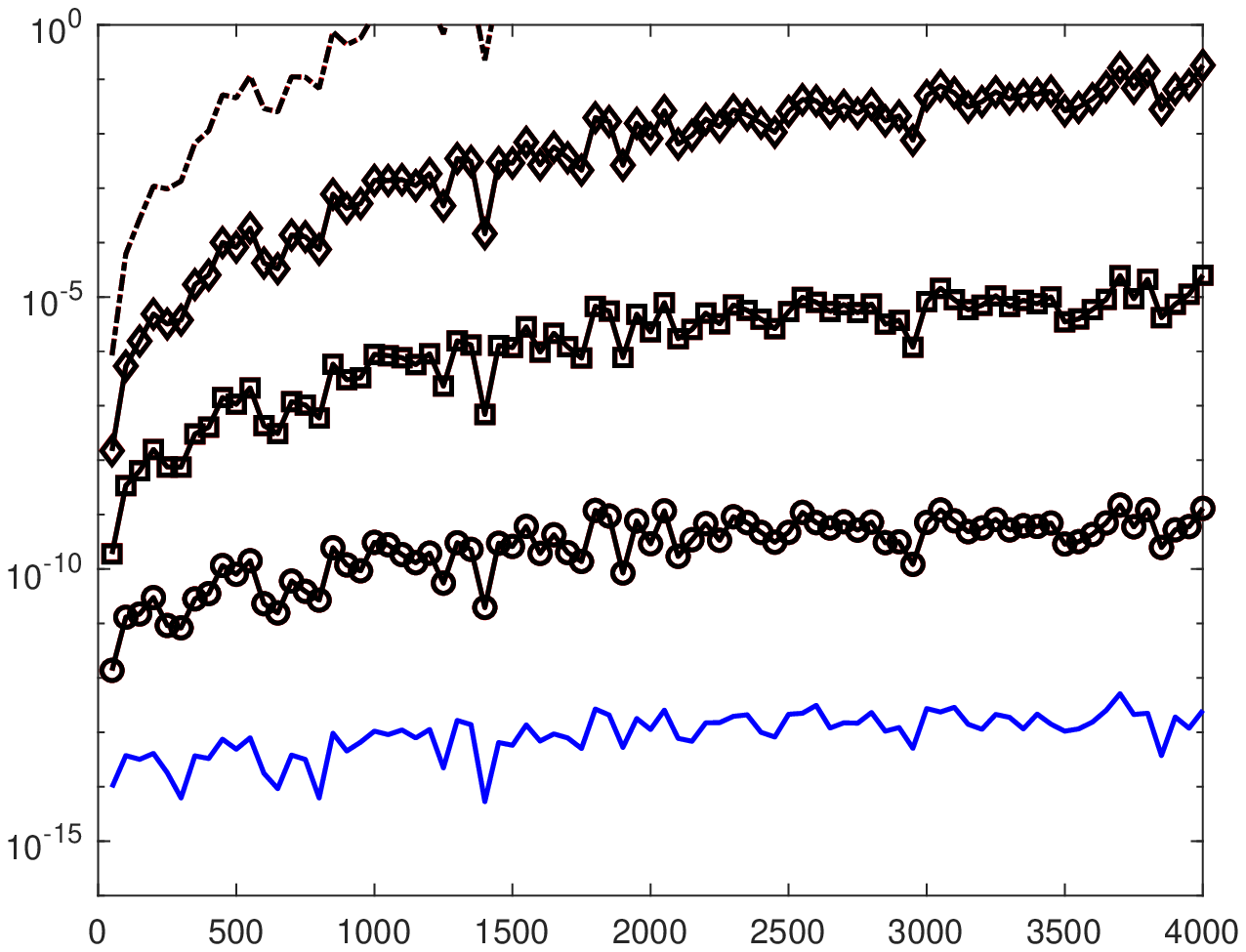}
        \caption{Mean approximation error (left) and Maximum approximation error (right) at $N=100000$ random points in the interval $[-1,1]$ relative to the function $f_1(x)=xe^{-2x}+\sin(3x)$ (in blue) and its first four derivatives, with the increasing order from the bottom to the top, when approximated by using the constrained mock-Chebyshev least squares operator with $n=50k$, $k=1,\dots,80$.  From the plots, it is evident that the application of the two strategies $S_1$ (red) and $S_2$ (black) gives practically the same results.}
    \label{figureErrorg}
    \end{center}
\end{figure}

% \begin{figure} 
% \begin{center}
%           \includegraphics[width=.49\textwidth]{Funp.eps}
%           \includegraphics[width=.49\textwidth]{Funmaxg.eps}
%          \caption{Mean approximation error (left) and Maximum approximation error (right) at $N=100000$ random points in the interval $[-1,1]$ relative to the function $f_2(x)=8x^4-8x^2+1$ (in blue) and its first four derivatives, with the increasing order from the bottom to the top, when approximated by using the constrained mock-Chebyshev least squares operator with $n=50k$, $k=1,\dots,80$.  From the plots, it is evident that the application of the two strategies $S_1$ (red) and $S_2$ (black) gives practically the same results.}
%     \label{figureErrorx5}
%     \end{center}
% \end{figure}
\begin{figure}      
\begin{center}
\includegraphics[width=.49\textwidth]{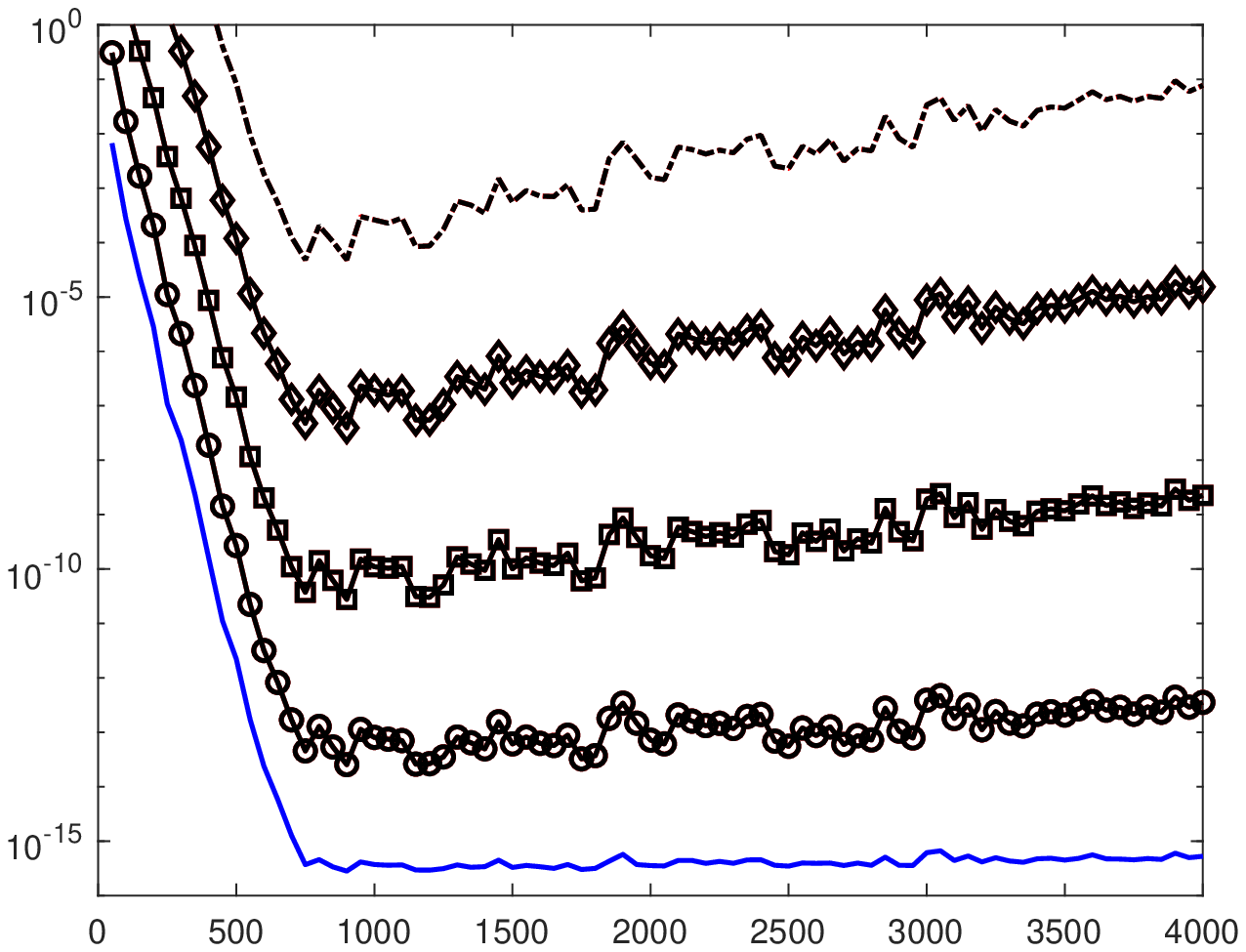}
  \includegraphics[width=.49\textwidth]{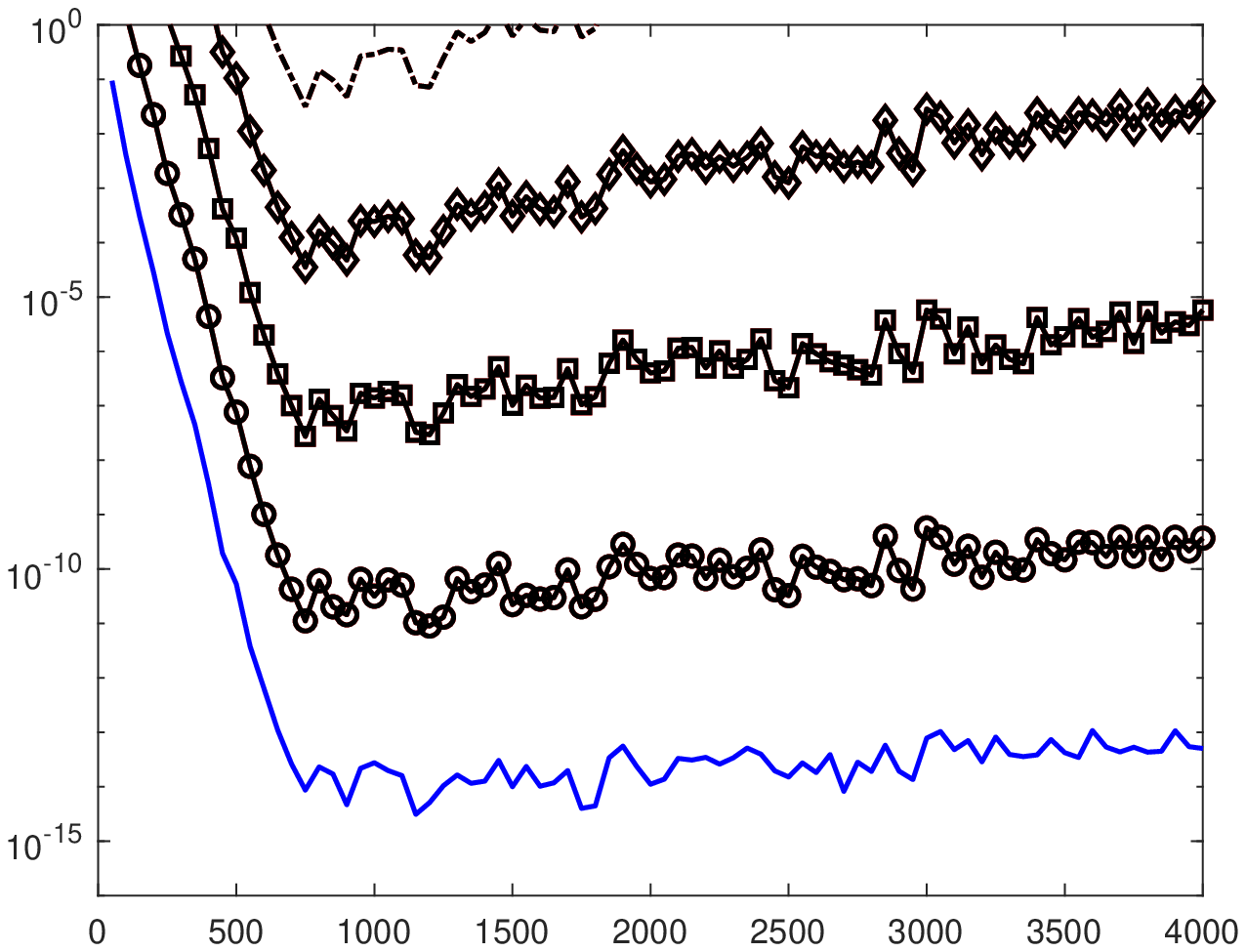}
         \caption{Mean approximation error (left) and Maximum approximation error (right) at $N=100000$ random points in the interval $[-1,1]$ relative to the function $ f_2(x)=e^{-50(x-0.4)^2}+\sinh(x)$ (in blue) and its first four derivatives, with the increasing order from the bottom to the top, when approximated by using the constrained mock-Chebyshev least squares operator with $n=50k$, $k=1,\dots,80$.  From the plots, it is evident that the application of the two strategies $S_1$ (red) and $S_2$ (black) gives practically the same results.}
    \label{figureErrorf8}
    \end{center}
\end{figure}

\begin{figure}      
\begin{center}
\includegraphics[width=.49\textwidth]{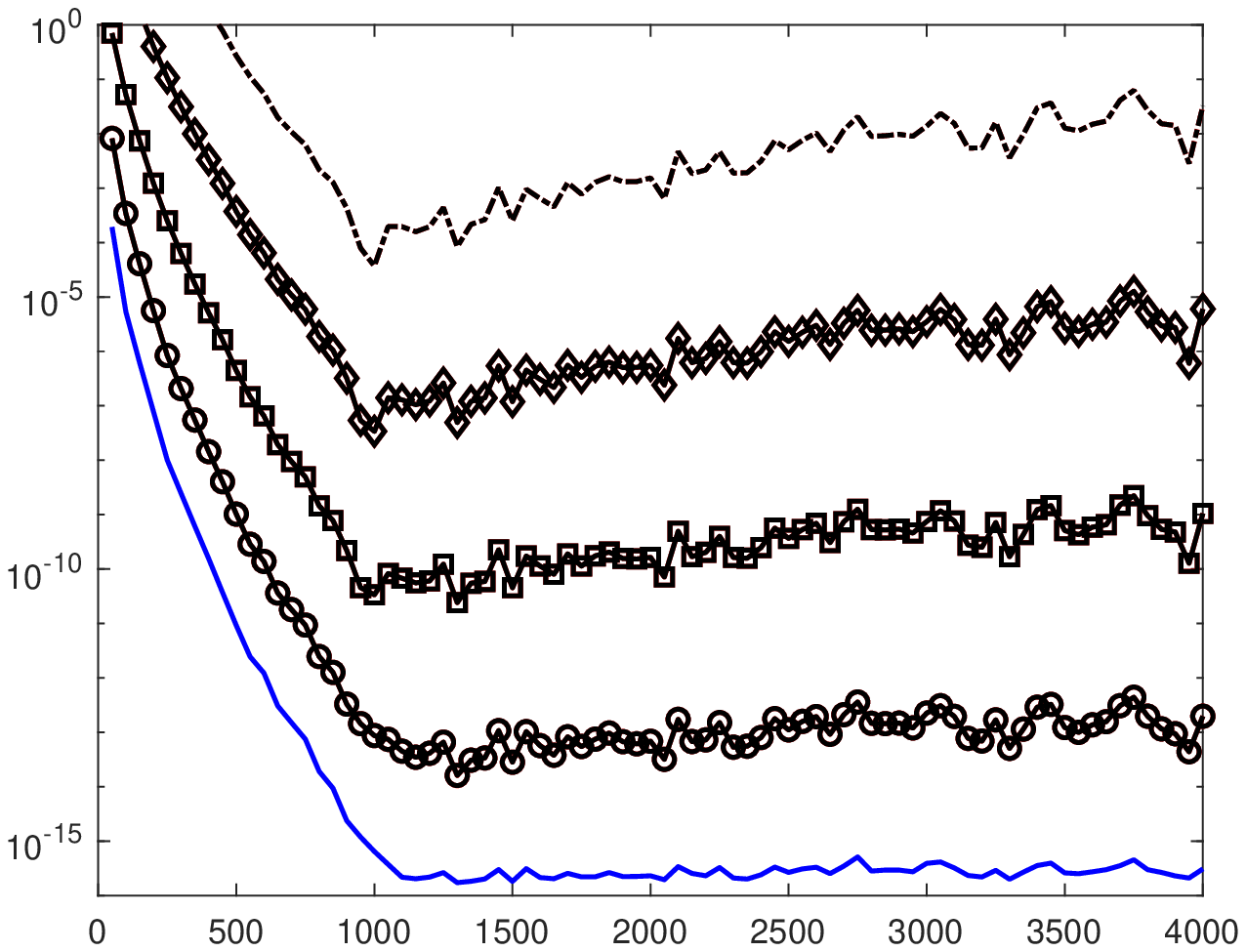}
  \includegraphics[width=.49\textwidth]{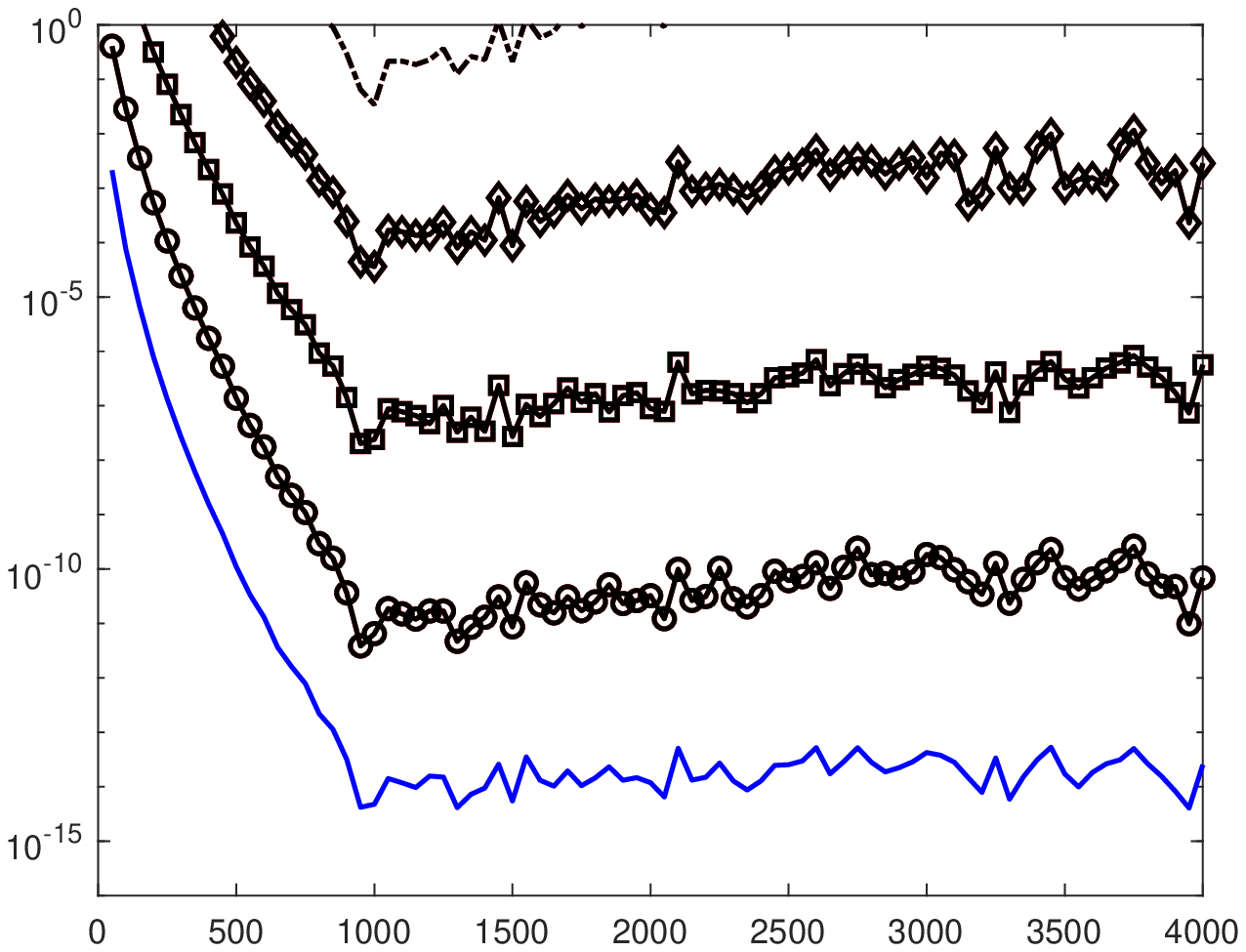}
         \caption{Mean approximation error (left) and Maximum approximation error (right) at $N=100000$ random points in the interval $[-1,1]$ relative to the function $f_3(x)=\frac{1}{1+8x^2}$ (in blue) and its first four derivatives, with the increasing order from the bottom to the top, when approximated by using the constrained mock-Chebyshev least squares operator with $n=50k$, $k=1,\dots,80$.  From the plots, it is evident that the application of the two strategies $S_1$ (red) and $S_2$ (black) gives practically the same results.}
    \label{figureErrorf1}
    \end{center}
\end{figure}

\begin{figure}      
\begin{center}
\includegraphics[width=.49\textwidth]{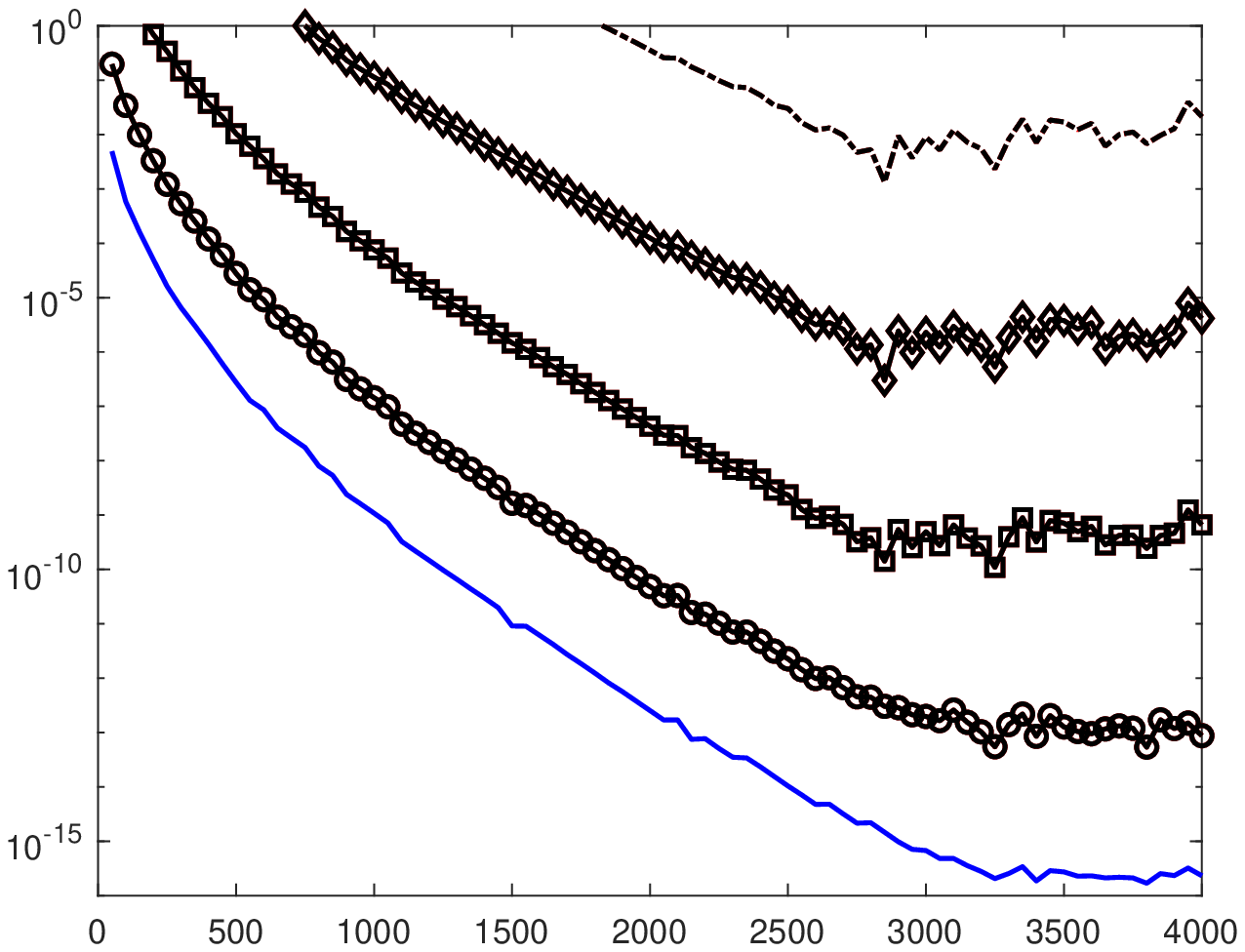}
  \includegraphics[width=.49\textwidth]{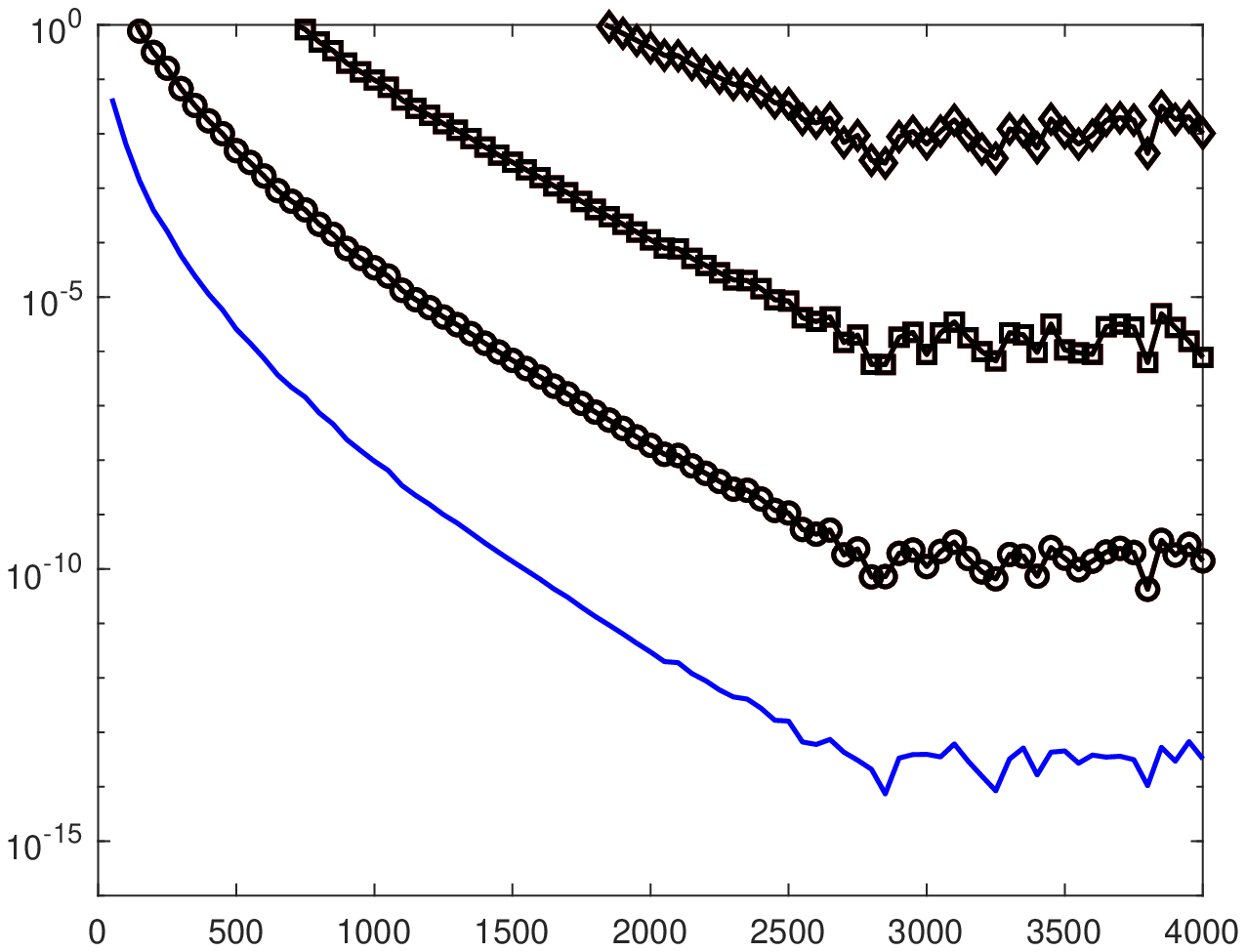}
         \caption{Mean approximation error (left) and Maximum approximation error (right) at $N=100000$ random points in the interval $[-1,1]$ relative to the function  $  f_4(x)=\frac{1}{1+25x^2}$ (in blue) and its first four derivatives, with the increasing order from the bottom to the top, when approximated by using the constrained mock-Chebyshev least squares operator with $n=50k$, $k=1,\dots,80$.  From the plots, it is evident that the application of the two strategies $S_1$ (red) and $S_2$ (black) gives practically the same results.}
    \label{figureErrorf4}
    \end{center}
\end{figure}

\section{Conclusions }
In this paper, we have analyzed the theoretical aspects of the constrained mock-Chebyshev least squares operator. We have introduced explicit representations of the error and its derivatives. By using the constrained mock-Chebyshev least squares operator, we have presented a method for approximating the successive derivatives of $f$ at any point $x\in [-1,1]$ and provided 
estimates for these approximations. This formula provides a global polynomial approximation of the successive derivatives of the function $f$.  

\section*{Acknowledgments}
 This research has been achieved as part of RITA \textquotedblleft Research
 ITalian network on Approximation'' and as part of the UMI group ``Teoria dell'Approssimazione
 e Applicazioni''. The research was supported by GNCS-INdAM 2022 projects. The authors are members of the INdAM Research group GNCS.

%%===========================================================================================%%
%% If you are submitting to one of the Nature Portfolio journals, using the eJP submission   %%
%% system, please include the references within the manuscript file itself. You may do this  %%
%% by copying the reference list from your .bbl file, paste it into the main manuscript .tex %%
%% file, and delete the associated \verb+\bibliography+ commands.                            %%
%%===========================================================================================%%
\bibliographystyle{abbrv}
\bibliography{bibliography}

\end{document}